\newtheorem{defi}{Definition}[section]
\newtheorem{thm}[defi]{Theorem}
\newtheorem{ex}[defi]{Example}
\newtheorem{rem}[defi]{Remark}
\newtheorem{prop}[defi]{Proposition}
\newtheorem{lemma}[defi]{Lemma}
\newtheorem{obs}[defi]{Observation}
\newcommand{\Sp}{\mathbb S}
\newcommand{\D}{\mathbb{D}}
\newcommand{\R}{\mathbb R}
\newcommand{\N}{\mathbb N}
\title[Steklov eigenvalues for
  prescribed boundary]{Steklov eigenvalues of
  submanifolds with prescribed boundary in Euclidean space}
\author{Bruno Colbois}
\address{Universit\'e de Neuch\^atel, Institut de Math\'ematiques, Rue
  Emile-Argand 11, CH-2000 Neuch\^atel, Switzerland}
\email{bruno.colbois@unine.ch}
\author{Alexandre Girouard}
\address{Département de mathématiques et de statistique,
Pavillon Alexandre-Vachon, Université Laval,
Québec, QC, G1V 0A6, Canada}
\email{Alexandre.Girouard@ulaval.ca}
\author{Katie Gittins}
\address{Max Planck Institute for Mathematics, Vivatsgasse 7, 53111 Bonn, Germany}
\email{gittinsk@mpim-bonn.mpg.de}
\subjclass[2010]{35P15, 58C40}
\keywords{Steklov problem, Euclidean space, prescribed boundary, manifolds, hypersurfaces of revolution}
\date{\today.}
\begin{document}
\begin{abstract}
  We obtain upper and lower bounds for Steklov eigenvalues of
  submanifolds with prescribed boundary in Euclidean space. A very general upper bound is
  proved, which depends only on the geometry of the fixed boundary and
  on the measure of the interior. Sharp lower bounds are given for
  hypersurfaces of revolution with connected boundary: we prove
  that each eigenvalue is uniquely minimized by the ball. We
  also observe that each surface of revolution with connected boundary
  is isospectral to the disk.
\end{abstract}
\maketitle

\section{Introduction}
The Steklov eigenvalues of a smooth, compact, connected Riemannian
manifold $(M,g)$ of dimension $n\geq 2$ with boundary $\Sigma$ are the real
numbers $\sigma$ for which there exists a nonzero harmonic function
$f:M\rightarrow\R$ which satisfies $\partial_\nu f=\sigma f$ on the
boundary $\Sigma$. Here and in what follows,
$\partial_\nu$ is the outward normal derivative on $\Sigma$. The Steklov
eigenvalues form a discrete sequence
$0=\sigma_0<\sigma_1\leq\sigma_2\leq\cdots\nearrow\infty,$
where each eigenvalue is repeated according to its multiplicity.

Several recent papers have investigated the
effects
of the geometry near the boundary on the Steklov eigenvalues,
for instance \cite{ceg2, ceg3, GPPS, prostu, YangYu}. In
particular, in \cite{ceg3} the authors investigated the situation where
the Riemannian metric $g$ is prescribed on the boundary $\Sigma$. On
any  manifold of dimension $n\geq 3$ they proved that
conformal perturbations $\omega^2g$ with $\omega\equiv 1$ on $\Sigma$
can make $\sigma_1$ arbitrarily large, as well as arbitrarily
small. Hence
the prescription of $g$ on the boundary does
not constrain the spectrum very much.

In the present paper, we study upper and lower bounds for Steklov eigenvalues in an even more rigid situation,
where a closed (possibly not connected) submanifold
$\Sigma\subset\R^{m}$ of dimension $n-1$ is given, and
where we consider the Steklov spectrum of submanifolds $M\subset\R^{m}$ with prescribed boundary
$\Sigma$.

Regarding upper bounds in this setting, our first result shows that it
is possible to control each eigenvalue $\sigma_k$ by simply
controlling the volume of $M$.
\begin{thm}\label{thm:verygeneralupperbound}
  Let $\Sigma$ be a fixed $(n-1)$-dimensional compact, smooth
  submanifold in $\R^{m}$. There exists a constant
  $A$ depending on $\Sigma$ such that any
  compact $n$-dimensional
  submanifold $M$ of $\R^m$ with boundary $\Sigma$ satisfies
 \begin{equation*}
   \sigma_k(M) \le A \vert M\vert k^{2/(n-1)}.
 \end{equation*}
\end{thm}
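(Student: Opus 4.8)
The plan is to use the variational characterization
\[
\sigma_k(M) = \min_{\substack{E \subset H^1(M)\\ \dim E = k+1}} \ \max_{0 \ne f \in E} R(f), \qquad R(f) := \frac{\int_M |\nabla f|^2}{\int_\Sigma f^2},
\]
applied to a subspace spanned by $k+1$ Lipschitz test functions with pairwise disjoint supports, each concentrated in a small Euclidean ball centred at a point of $\Sigma$. For disjointly supported functions, the mediant inequality gives $R\bigl(\sum c_if_i\bigr)\le\max_i R(f_i)$, so it is enough to produce $k+1$ such functions each with small Rayleigh quotient. Centring the balls on $\Sigma$ is what makes the denominator $\int_\Sigma f_i^2$ bounded below; the numerator $\int_M|\nabla f_i|^2$ will be controlled by the Euclidean volume of $M$ inside the ball, and all we will use about $M$ is the variational principle together with the fact that the restriction to $M$ of a Lipschitz function on $\R^m$ lies in $H^1(M)$ with intrinsic gradient bounded by its Lipschitz constant. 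Throughout, $B(x,s)$ denotes the open Euclidean ball, $|\cdot|$ the appropriate-dimensional volume, and $\omega_{n-1}$ the volume of the unit ball in $\R^{n-1}$; $C_n$ denotes a constant depending only on $n$ that may change from line to line.

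First I would fix, using the smoothness and compactness of $\Sigma$, a scale $r_0=r_0(\Sigma)>0$ such that for every $x\in\Sigma$ and every $s\le 100\,r_0$ one has $\tfrac12\omega_{n-1}s^{n-1}\le |\Sigma\cap B(x,s)|\le 2\omega_{n-1}s^{n-1}$; this is just the uniform almost-flatness of a smooth submanifold at small scales. For $k\ge1$ I set $r=r_k:=\min\bigl(r_0,\ \tfrac14(|\Sigma|/(8\omega_{n-1}(k+1)))^{1/(n-1)}\bigr)$. A maximal $4r$-separated subset $\{x_0,x_1,\dots,x_N\}$ of $\Sigma$ is a $4r$-net, so $\Sigma\subset\bigcup_i B(x_i,4r)$ and the volume estimate (valid since $4r\le 100\,r_0$) gives $N+1\ge |\Sigma|/(2\omega_{n-1}(4r)^{n-1})\ge 4(k+1)$ in either regime of the minimum defining $r_k$. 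Since the points are pairwise at distance $\ge 4r$, the balls $B(x_i,r)$ are pairwise disjoint.

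To each $x_i$ I associate the Lipschitz function on $\R^m$ equal to $1$ on $B(x_i,r/2)$, equal to $0$ off $B(x_i,r)$, with $|\nabla f_i|\le 2/r$, restricted to $M$. Then $\int_M|\nabla f_i|^2\le (2/r)^2|M\cap B(x_i,r)|$ and $\int_\Sigma f_i^2\ge |\Sigma\cap B(x_i,r/2)|\ge \tfrac12\omega_{n-1}(r/2)^{n-1}$, so $R(f_i)\le C_n\,r^{-(n+1)}\,|M\cap B(x_i,r)|$. Because the balls are disjoint, $\sum_i|M\cap B(x_i,r)|\le|M|$, so among the $\ge 4(k+1)$ indices at least $k+1$ satisfy $|M\cap B(x_i,r)|\le |M|/(k+1)$; the corresponding $f_i$ span a $(k+1)$-dimensional subspace of $H^1(M)$ (they are nonzero and disjointly supported, and $\int_\Sigma f_i^2>0$) on which every Rayleigh quotient is at most $C_n|M|/((k+1)r_k^{n+1})$. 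Hence $\sigma_k(M)\le C_n|M|/((k+1)r_k^{n+1})$. Writing $r_k^{n+1}=r_k^{n-1}r_k^2$ and substituting the value of $r_k$ finishes the proof: in the regime $r_k=\tfrac14(|\Sigma|/(8\omega_{n-1}(k+1)))^{1/(n-1)}$ one gets $\sigma_k(M)\le C_n|\Sigma|^{-(n+1)/(n-1)}|M|(k+1)^{2/(n-1)}$, exactly because $\tfrac{n+1}{n-1}-1=\tfrac{2}{n-1}$; the other regime $r_k=r_0$ occurs only for the boundedly many $k$ with $k+1<|\Sigma|/(8\omega_{n-1}(4r_0)^{n-1})$, where one simply bounds $\sigma_k(M)\le C_nr_0^{-(n+1)}|M|$. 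Taking $A=A(n,\Sigma)$ to be the larger of the two constants and using $(k+1)^{2/(n-1)}\le 2^{2/(n-1)}k^{2/(n-1)}$ for $k\ge1$ (and $\sigma_0=0$ for $k=0$) yields the stated bound.

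The main obstacle is conceptual rather than computational. One has complete information about the fixed boundary $\Sigma$ but essentially none about the interior of $M$ beyond its total volume, so the test functions must be concentrated near $\Sigma$; the pigeonhole step is precisely what converts "bounded total interior volume" into "$k+1$ balls of small interior volume", and the only genuine input about $M$ needed is that its interior volume near $\Sigma$ is finitely additive over the disjoint balls. The remaining delicate point is the choice of the concentration scale $r_k$ as a function of $k$: it is this matching that produces the exponent $2/(n-1)$, and one must separate off the finitely many small values of $k$ for which the geometrically admissible scale $r_0$ forces a weaker, but still sufficient, estimate.
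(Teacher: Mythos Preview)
Your argument is correct and complete; the exponent bookkeeping, the pigeonhole step, and the handling of the finitely many small values of $k$ all check out. Your route, however, differs from the paper's. The paper first bounds $\mu(B_{\R^m}(x,r))$ from \emph{above} for all $r$ by combining the distortion of the components of $\Sigma$ with Bishop--Gromov (Ricci lower bound plus diameter), and then feeds this into an abstract metric--measure packing lemma (their Lemma~4.1) to produce $2k+2$ well-separated heavy sets; the resulting constant is explicit in the number of components, the Ricci bound, the diameter, and the distortion of $\Sigma$, and it also picks up the packing constant $C(m)$ of the ambient $\R^m$. You instead use a local \emph{two-sided} volume estimate coming directly from smoothness at a fixed scale $r_0(\Sigma)$, build the disjoint balls by a maximal separated net on $\Sigma$, and compensate for the lack of a global upper volume bound by absorbing small $k$ into the constant. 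Your approach is more elementary---no Bishop--Gromov, no abstract lemma, and the constant does not see the ambient dimension $m$---while the paper's approach makes the dependence of $A$ on concrete geometric invariants of $\Sigma$ transparent.
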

\begin{rem}
  The dependance of the constant $A$ on the geometry of $\Sigma$ is
  described in Section~\ref{section:proofverygeneral}. It involves the volume of $\Sigma$, a lower
  bound on its Ricci curvature and an upper bound on its diameter. It
  also depends on the number of connected components of $\Sigma$ as
  well as their maximal distortion (See formula \eqref{def:distortion} for the
  definition of distortion).
\end{rem}
\begin{rem}
  In the specific case of $\sigma_1$, a similar result was proved by Ilias and
  Makhoul~\cite[Theorem 1.2]{IliasMakhoul}. Their bounds are in terms of the $r$-th mean
  curvatures $H_r$ of
  $\Sigma$ in the ambient space $\R^m$, and in terms of the measure $|M|$.
\end{rem}
\begin{rem}
  In the abstract Riemannian setting, there exist situations
  where $|\Sigma|=1$, $|M|\leq 2$ and $\sigma_1$ is arbitrarily large
  (See \cite[Proposition 6.2]{ceg2}). By the Nash isometric embedding theorem, we can
  realize these examples as submanifolds in $\R^m$. The constant $A$
  must therefore depend on $\Sigma$.
\end{rem}

In the situation where $\Sigma$ bounds a domain $\Omega$ in a linear subspace of dimension $n$, it is possible to
obtain an upper bound which depends only on $|\Omega|$, $|M|$ and the dimension. In
particular, it does not involve the Ricci curvature and diameter of $\Sigma$.
\begin{thm}\label{thm:generalupperbound}
  Let $\Sigma$ be an $(n-1)$-dimensional, connected, smooth
  hypersurface in $\R^n \times \{0\} \subset \R^m$ and $\Omega \subset \R^{n} \times \{0\}$ denote the domain with
  boundary $ \Sigma=\partial \Omega$. Let $M \subset \R^{m}$
  be an $n$-dimensional hypersurface  with boundary $\partial M =
  \Sigma$. Then, for each $k \ge 1$,
  \begin{equation}\label{e1.1}
    \sigma_k(M) \leq A(n)
    \frac{\vert M \vert}{\vert \Omega \vert^{(n+1)/n}} k^{2/(n-1)},
  \end{equation}
  where $A(n)$ is a constant depending only on the dimension $n$.
\end{thm}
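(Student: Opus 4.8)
The plan is to build, for each $k\geq 1$, a family $f_0,\dots,f_k\in H^1(M)$ with pairwise disjoint supports satisfying
\[
\frac{\int_M|\nabla_M f_i|^2\,dV_M}{\int_\Sigma f_i^2\,dV_\Sigma}\ \leq\ A(n)\,\frac{|M|}{|\Omega|^{(n+1)/n}}\,k^{2/(n-1)}\qquad(0\le i\le k),
\]
and then to invoke the variational characterization of $\sigma_k$: disjointly supported functions are linearly independent, and the Rayleigh quotient of any nonzero element of their span is a weighted average of the individual quotients, hence at most their maximum; this gives \eqref{e1.1}. The bridge between $M\subset\R^m$ and the flat domain $\Omega\subset\R^n\times\{0\}$ is the orthogonal projection $P\colon\R^m\to\R^n\times\{0\}$. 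Two elementary facts are used: $P$ restricted to $M$ is $1$-Lipschitz for the induced Riemannian distance (because $P$ is a linear contraction, so $\|dP|_{T_xM}\|\le 1$), and $P$ is the identity on $\R^n\times\{0\}$, hence on $\Sigma$. Consequently, for every Lipschitz $\phi\colon\R^n\to\R$, the pullback $f:=\phi\circ P|_M\in H^1(M)$ satisfies $\int_\Sigma f^2\,dV_\Sigma=\int_\Sigma\phi^2\,dV_\Sigma$ and $\int_M|\nabla_M f|^2\,dV_M\le\int_M(|\nabla\phi|\circ P)^2\,dV_M$. If moreover $\phi$ is supported in a set $S\subset\R^n$ with $\|\nabla\phi\|_\infty\le L$, then the last integral is $\le L^2\,|M\cap P^{-1}(S)|$; and if the supports $S_i$ are pairwise disjoint, so are the sets $P^{-1}(S_i)\cap M$, whence $\sum_i|M\cap P^{-1}(S_i)|\le|M|$. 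Thus everything reduces to choosing, \emph{in $\R^n$}, a disjoint family of ``capacitors'' $S_i$ carrying enough of $\Sigma$, with cut-offs $\phi_i$ equal to $1$ on the bulk of $S_i$ and with small Lipschitz constant.

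The geometric input that brings in $|\Omega|$ rather than $|\Sigma|$ is the divergence theorem: for $x\in\R^n$ and $B=B(x,\rho)$,
\[
n\,|\Omega\cap B|=\int_{\Sigma\cap B}\langle y-x,\nu\rangle\,dV_\Sigma+\rho\,\mathcal{H}^{n-1}(\Omega\cap\partial B),
\]
$\nu$ being the outward unit normal of $\Omega$. Since $\rho\mapsto\rho^{-n/2}|\Omega\cap B(x,\rho)|$ vanishes at $0$ and at $\infty$, for each $x$ one may pick a radius $\rho$ at which it is non-increasing; then $\rho\,\mathcal{H}^{n-1}(\Omega\cap\partial B)\le\tfrac n2|\Omega\cap B|$, so $\int_{\Sigma\cap B}\langle y-x,\nu\rangle\ge\tfrac n2|\Omega\cap B|$ and hence, as $\langle y-x,\nu\rangle\le\rho$ on $B$,
\[
\mathcal{H}^{n-1}(\Sigma\cap B)\ \ge\ \frac{n}{2\rho}\,|\Omega\cap B|.
\]

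Feeding this into a Grigor'yan--Netrusov--Yau type decomposition --- of the kind already used for Theorem~\ref{thm:verygeneralupperbound}, but arranged so as to deliver $\gtrsim k$ pairwise disjoint capacitors $C_i$ with $|\Omega\cap C_i|\gtrsim|\Omega|/k$, radii controlled in terms of $|\Omega|$ and $k$, and $C_i\cap\Sigma\neq\emptyset$ --- and taking $\phi_i$ a tent function supported in a fixed dilate of $C_i$, equal to $1$ on $C_i$, with $\|\nabla\phi_i\|_\infty\lesssim\rho_i^{-1}$, the displayed inequality gives $\int_\Sigma\phi_i^2\ge\mathcal{H}^{n-1}(\Sigma\cap C_i)\gtrsim|\Omega\cap C_i|/\rho_i\gtrsim|\Omega|^{(n-1)/n}/k$. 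Discarding the half of the $C_i$ with largest $|M\cap P^{-1}(\mathrm{supp}\,\phi_i)|$ (the rest satisfying $|M\cap P^{-1}(\mathrm{supp}\,\phi_i)|\le 2|M|/k$ by disjointness) and relabelling, the functions $f_i=\phi_i\circ P|_M$ have
\[
\frac{\int_M|\nabla_M f_i|^2}{\int_\Sigma f_i^2}\ \lesssim\ \frac{\rho_i^{-2}\,|M|/k}{|\Omega|^{(n-1)/n}/k}\ \lesssim\ \frac{|M|}{|\Omega|^{(n+1)/n}}\;k^{2/(n-1)},
\]
the power of $k$ being the one produced by the covering lemma's bookkeeping.

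The decisive difficulty I foresee is the construction of the family $\{C_i\}$: one must arrange capacitors that are \emph{simultaneously} large enough in $\Omega$-volume --- which forces $\rho_i$, and thereby $\|\nabla\phi_i\|_\infty$, to be controlled purely by $|\Omega|$ and $k$ --- and positioned so as to meet $\Sigma$ with a quantitatively controlled boundary mass. Running the decomposition on $\Omega$ alone gives good radii but loses contact with $\Sigma$; running it on $\Sigma$ alone controls the boundary but not the radii (a highly corrugated $\Sigma$ can hide most of its area inside an arbitrarily small ball). Reconciling the two --- presumably by running the covering argument on a thickening $\{y:\operatorname{dist}(y,\R^n\setminus\Omega)<t\}$ of $\Sigma$ inside $\Omega$ at a scale $t$ tuned to $|\Omega|$ and $k$, and using the divergence identity above as the passage from volume to boundary mass --- is where the exponent $(n+1)/n$ on $|\Omega|$ is genuinely earned, and where the bulk of the argument lies.
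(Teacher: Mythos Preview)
Your scaffolding---pull back Lipschitz plateaus from $\R^n$ through the orthogonal projection $P$, obtain disjointly supported test functions on $M$, then pigeonhole on $|M\cap P^{-1}(\mathrm{supp}\,\phi_i)|$---is exactly the paper's (the preimages $P^{-1}(S)$ appear there as ``cylinders'' $S\times\R^{m-n}$). The difficulty you flag in your last paragraph is real, however, and the divergence-theorem device does not close it. Your relation $\mathcal H^{n-1}(\Sigma\cap B(x,\rho))\ge\tfrac{n}{2\rho}|\Omega\cap B(x,\rho)|$ holds only at an $x$-dependent radius chosen by an extremality argument; it gives no control at a prescribed scale, hence none on the sets produced by a GNY decomposition. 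Worse, the capacitors you ask for---balls with $|\Omega\cap C_i|\gtrsim|\Omega|/k$ and radii $\rho_i\asymp|\Omega|^{1/n}k^{-1/(n-1)}$---need not exist: for $\Omega=[0,L]\times[0,\epsilon]^{n-1}$ with $\epsilon$ small, a ball of that radius meets $\Omega$ in volume at most $\sim\epsilon^{n-1}\rho_i\ll|\Omega|/k$. This is also why your displayed chain is arithmetically inconsistent: the estimate $|\Omega\cap C_i|/\rho_i\gtrsim|\Omega|^{(n-1)/n}/k$ together with $|\Omega\cap C_i|\sim|\Omega|/k$ forces $\rho_i\lesssim|\Omega|^{1/n}$ with no gain in $k$, while the last line requires $\rho_i\gtrsim|\Omega|^{1/n}k^{-1/(n-1)}$ to produce the exponent $2/(n-1)$.

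The paper's bridge between $|\Omega|$ and boundary mass is the classical isoperimetric inequality in $\R^n$, used globally rather than locally. One fixes the scale $r_k\asymp|\Omega|^{1/n}(k+1)^{-1/(n-1)}$ \emph{a priori} and shows, for \emph{any} centres $x_1,\dots,x_{2k+2}$, that $\Omega_0:=\Omega\setminus\bigcup_j B(x_j,4r_k)$ has $|\Omega_0|\ge\tfrac34|\Omega|$; then $|\partial\Omega_0|\ge n\omega_n^{1/n}|\Omega_0|^{(n-1)/n}$ and subtracting the spherical caps yields $|\Sigma_0|:=\bigl|\Sigma\setminus\bigcup_j B_j\bigr|\gtrsim|\Omega|^{(n-1)/n}$. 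The $2k+2$ balls are then chosen greedily in $\mu=\mathcal H^{n-1}\mres\Sigma$-mass, and one dichotomises: either every chosen ball already has $\mu(B)\gtrsim|\Omega|^{(n-1)/n}/(k+1)$, in which case the balls themselves serve as capacitors; or all balls of radius $r_k$ centred in $\Sigma_0$ are $\mu$-light, and Lemma~\ref{CMrevisited} is applied to $\mu\bigl|_{\Sigma_0}$ (not to Lebesgue measure on $\Omega$). In either branch the denominator of the Rayleigh quotient is $\gtrsim|\Omega|^{(n-1)/n}/(k+1)$ and the gradient bound is $r_k^{-2}$, which together give $k^{2/(n-1)}$. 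The key point is that one never needs a set that is simultaneously $\Omega$-heavy and $\Sigma$-heavy: the isoperimetric step converts $|\Omega|$ once and for all into a lower bound on the \emph{total} remaining $\Sigma$-mass, after which the decomposition is run purely on the $\Sigma$-measure with the scale already fixed.
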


\begin{rem}
  The control of the volume $|\Omega|$ in \eqref{e1.1} is
  necessary. Indeed, it was proved in \cite[Section 5]{CDE} that
  there exists a sequence of compact hypersurfaces $\Sigma_\ell\subset\R^{n}\times\{0\}$
  of volume $|\Sigma_\ell|=1$ such that
  $\lambda_1(\Sigma_\ell)\to\infty$. It follows from \cite{ceg1} that
  $\Sigma_\ell$  bounds a domain $\Omega_\ell$ with $|\Omega_\ell|\to 0$ as $\ell\to\infty$.
  Now, consider $M_\ell\subset\R^{n+1}$ an $n$-dimensional manifold with
  connected boundary $\partial M_\ell=\Sigma_\ell$, uniformly bounded
  volume $|M_\ell|$ and such that a neighborhood of the
  boundary is isometric to the cylinder $\Sigma_\ell\times [0,1)$. Then,
  from the Dirichlet-Neumann bracketing \eqref{ineq:compmixed} and Lemma
  \ref{lemma:EigenvalueCylinder} (see Section~\ref{section:general}),
  it follows that $\sigma_1(M_\ell)\to\infty$.
\end{rem}

\begin{rem}
Given a fixed submanifold $\Sigma\subset\R^m$ of dimension $n-1$, we
do not know if there exists a sequence $M_j\subset\R^m$ of
$n$-dimensional submanifolds with boundary $\Sigma$ such that
$\lim_{j\to\infty}\sigma_1(M_j)=\infty$. The above theorems show that
if 
such a sequence exists, then 
the volumes $|M_j|$ must also tend to infinity. Note
that in the Riemannian setting, there exists an example of a compact
manifold $M$ with connected boundary $\Sigma$ and a sequence of
Riemannian metrics $g_j$ such that the $g_j$ coincide on $\Sigma$
while $\sigma_1(M,g_j)\to\infty$ as $j \to \infty$ and the volume of $(M,g_j)$ is
bounded. See \cite{cg}.
\end{rem}

Regarding lower bounds, we first note that there are only a few known results.
A general lower bound was obtained by Jammes \cite{jammes1}, in terms of the Cheeger
constant $h(M)$ of $M$ and of a new
Cheeger-type constant $j(M)$. He proved that
$\sigma_1(M)\geq \frac{h(M)j(M)}{4}$. However, it is easy to construct examples where
$\sigma_1$ is bounded away from zero while $h(M)j(M)$ becomes
small. See also \cite{esco1} for a related lower bound and
\cite{HassaMiclo} for lower bounds on $\sigma_k$, $k \geq
1$. Antoine Métras has recently constructed an example of a
  sequence $M_j\subset\R^m$ with prescribed boundary $\Sigma$ such that
  $\lim_{j\to\infty}\sigma_1(M_j)=0$. This example and its developments will
be published elsewhere.

Nevertheless, in the particular context of hypersurfaces of revolution
(see Section~\ref{warped} for the precise definition), it is not difficult
to obtain lower and upper bounds because a neighborhood of the
boundary is quasi-isometric to a cylinder (see Section~\ref{warped}).
By working directly with the Min-Max characterization of
eigenvalues and using Fourier decomposition, it is possible to obtain
sharp lower bounds.
\begin{thm} \label{thm:lowerboundrev1bdry}
  For each $k \geq 1$, any hypersurface of revolution $M\subset\R^{n+1}$ with boundary
  $\Sp^{n-1}\times\{0\}$ satisfies
  $\sigma_k(M) \ge \sigma_k(\mathbb B^{n})$,
  with equality if and only if $M=\mathbb B^{n}\times\{0\}$.
\end{thm}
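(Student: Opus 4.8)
The plan is to reduce the problem to a one-dimensional analysis via separation of variables, exploiting the rotational symmetry. A hypersurface of revolution $M\subset\R^{n+1}$ with boundary $\Sp^{n-1}\times\{0\}$ can be written as a warped product $[0,L]\times\Sp^{n-1}$ with metric $dr^2 + h(r)^2 g_{\Sp^{n-1}}$, where the boundary corresponds to $r=0$ and $h(0)=1$ (so that the boundary is the unit sphere), and the "axis" or cap corresponds to $r=L$. The Laplace operator on $\Sp^{n-1}$ has eigenvalues $\mu_\ell = \ell(\ell+n-2)$, $\ell\geq 0$, with eigenfunctions $Y_\ell$. Writing a harmonic function on $M$ as $f(r,\theta)=\varphi(r)Y_\ell(\theta)$ leads to an ODE in $\varphi$, and the Steklov condition $\partial_\nu f = \sigma f$ at $r=0$ becomes a condition on $\varphi(0),\varphi'(0)$. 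So the Steklov spectrum of $M$ decomposes as a union, over $\ell\geq 0$, of the Steklov values $\sigma$ arising in each Fourier sector.

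First I would set up the variational (Rayleigh quotient) characterization of $\sigma_k(M)$ and use the Fourier decomposition of $L^2(M)$ into sectors $H_\ell = \{\varphi(r)Y_\ell(\theta)\}$ indexed by spherical harmonics of degree $\ell$. The key point is that the Dirichlet energy $\int_M|\nabla f|^2$ and the boundary $L^2$-norm $\int_\Sigma f^2$ both split orthogonally across these sectors: within sector $\ell$, the Rayleigh quotient of $\varphi(r)Y_\ell$ equals a one-dimensional quotient
\[
R_\ell(\varphi)=\frac{\int_0^L\bigl(\varphi'(r)^2+\mu_\ell h(r)^{-2}\varphi(r)^2\bigr)h(r)^{n-1}\,dr}{\varphi(0)^2},
\]
against the weight $h^{n-1}$. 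I would then compare the weighted one-dimensional problem on $M$ with the corresponding problem on the ball $\mathbb B^n$, where $h_0(r) = $ the profile of the round metric (so $h_0(r)=\sin$-type or simply $h_0(0)=1$, $h_0$ decreasing to $0$). The crucial monotonicity input is that among all admissible profiles $h$ with $h(0)=1$, the ball's profile $h_0$ minimizes each quantity $R_\ell$ in a suitable sense; intuitively, "fattening" the manifold near the boundary can only make it easier to have small Steklov energy — but here the boundary is pinned to the unit sphere, so the geometry is constrained. This is where the hypothesis that the boundary is exactly $\Sp^{n-1}$ (connected, unit sphere) is used: it fixes $h(0)=1$ and the total picture.

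The main obstacle, and the heart of the argument, is establishing the sector-by-sector comparison $\sigma^{(\ell)}_j(M)\geq \sigma^{(\ell)}_j(\mathbb B^n)$ together with the characterization of equality. I would handle the $\ell=0$ (radial) sector and the $\ell\geq 1$ sectors somewhat separately: for $\ell=0$ the harmonic functions are constants on each sphere and the analysis is essentially that of a weighted interval, while for $\ell\geq 1$ the curvature term $\mu_\ell h^{-2}$ dominates and one can bound $h(r)\le$ (something) using that $M$ is a hypersurface of revolution embedded in $\R^{n+1}$ with the given boundary — in particular a neighborhood of $\Sigma$ being quasi-isometric to a cylinder, as noted in the excerpt. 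Once the comparison holds in every sector, I would reassemble: since $\sigma_k(M)$ is the $k$-th value in the merged, sorted list of all sector eigenvalues, and each sector list for $M$ dominates the corresponding one for $\mathbb B^n$ term by term, the merged list for $M$ dominates that of $\mathbb B^n$, giving $\sigma_k(M)\ge\sigma_k(\mathbb B^n)$. Finally, for the equality case, I would trace back: equality at some $k$ forces equality in the comparison in the relevant sector(s) for the relevant index, which forces $h\equiv h_0$, i.e. $M=\mathbb B^n\times\{0\}$; care is needed because the relevant sector could a priori be any $\ell$, so I would argue that equality in any single sector already pins the profile. Verifying rigorously that the weighted one-dimensional minimization is uniquely solved by $h_0$ — presumably via a direct computation or a convexity/rearrangement argument on $\int h^{n-1}$ and $\int h^{n-3}$ — is the step I expect to require the most care.
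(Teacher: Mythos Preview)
Your framework (Fourier decomposition, one-dimensional Rayleigh quotients) is the right one, but the proposal is missing the single concrete ingredient that makes the comparison work, and in fact you have the key inequality pointing the wrong way. The profile of the ball is simply $h_0(r)=1-r$ on $[0,1]$; there is no ``$\sin$-type'' function here. The crucial geometric input is that the generating curve is parametrized by arc length, so $|h'|\le 1$, which together with $h(0)=1$ gives $h(r)\ge 1-r$ on $[0,1]$. You want a \emph{lower} bound on $h$, not an upper bound: the energy density in sector $\ell$ is $a'(r)^2 h(r)^{n-1}+\lambda_\ell\, a(r)^2 h(r)^{n-3}$, and for $n\ge 3$ both exponents are nonnegative, so $h(r)\ge 1-r$ immediately gives that the energy on $M$ dominates the same integrand with $h$ replaced by $1-r$, integrated over $[0,1]\subset[0,L]$. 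Your sentence ``one can bound $h(r)\le$ (something)'' is exactly backwards, and the quasi-isometry to a cylinder is irrelevant for a sharp bound.

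Given this, the paper's argument is also simpler than your proposed sector-by-sector eigenvalue comparison: take the first $k$ Steklov eigenfunctions $u_1,\dots,u_k$ of $M$, restrict each to $r\in[0,1]$, and view them as test functions on $\mathbb B^n$ (where the metric is $dr^2+(1-r)^2g_0$). The inequality above shows $R_M(u_i)\ge R_{\mathbb B^n}(u_i|_{\mathbb B^n})$, and min-max on the ball gives $\sigma_k(\mathbb B^n)\le\sigma_k(M)$ directly; no merging of sector spectra is needed. One technical point you do not mention: these restrictions are generally discontinuous at the center of the ball, so one must check they lie in $H^1(\mathbb B^n)$ (bounded, smooth away from a point, with finite Dirichlet energy; a point has zero capacity). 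Finally, the argument as written uses $n\ge 3$; the case $n=2$ is handled separately by showing every surface of revolution with connected boundary $\Sp^1$ is Steklov isospectral to the disk. Equality forces $L=1$ and $h(r)=1-r$ from the inequality $h(r)\ge 1-r$ being tight.
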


  \begin{rem}
  Let $\Sigma$ be an $(n-1)$-dimensional, connected, smooth
  hypersurface in $\R^n$ bounding a domain $\Omega$. It is not true in
  general that
  $\sigma_k(\Omega)\leq\sigma_k(M)$ for each compact manifold $M$ with
  $\Sigma=\partial M$. The domain $\Omega$ which is shown on the left
  side of Figure  \ref{figure:example} has a thin passage in its complement. One can
  consider a surface $M$ containing the passage and with $\partial
  M=\partial\Omega$. This could for instance be obtained by gluing a
  spherical cap to the domain on the right of the figure. If the
  passage is thin enough, any eigenvalue
  $\sigma_k(M)$  will be arbitrarily small. See \cite[Section 4.1]{gpsurvey}.
  \begin{figure}
    \includegraphics[width=9cm]{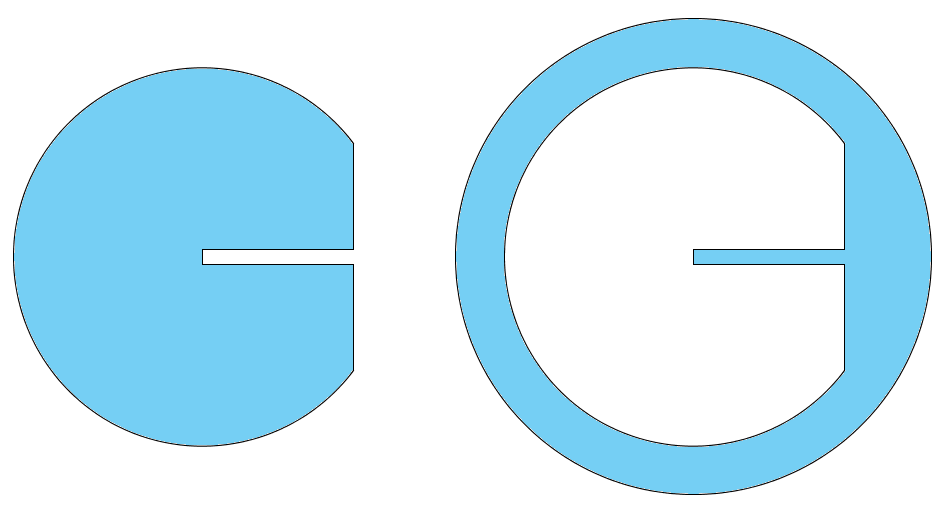}
    \caption{A domain with a thin passage in its complement
    \label{figure:example}}
  \end{figure}
\end{rem}

\begin{rem}
  In the situation where $n=2$, we will prove that each surface of revolution $M\subset\R^3$ with
  boundary $\Sp^1\subset\R^2\times\{0\}$ has the same spectrum as the unit
  disk: $\sigma_k(M)=\sigma_k(\D)$ for each $k\in\N$. It is possible to
  deduce this from the Osgood-Phillips-Sarnak uniformization theorem
  \cite{OPS}, but we give a direct simple proof in Appendix
  A. As far as the authors are aware, this is the first time
    that this isopectrality result appears in the literature.
  In dimension at least $3$, it is not difficult to
  construct examples of hypersurfaces of revolution where $\sigma_k(M)>\sigma_k(\mathbb{B}^n).$ See
  Example \ref{example:mixed}.
\end{rem}

Similar methods can also be applied to hypersurfaces of revolution with two boundary
components that are located in $\R^n \times \{0\}$ and $\R^n \times \{d\}$ respectively.
\begin{thm}\label{thm:lowerboundrev2bdry}
  Let $M\subset\R^{n+1}$ be a hypersurface of revolution with
  boundary $\Sp^{n-1}\times\{0\}\cup
  \Sp^{n-1}\times\{d\}\subset\R^{n+1}$.
  Let $L>0$ be the intrinsic distance between these two components.
  If $L\geq 2$, then for each $k \ge 1$,
  \begin{equation*}
    \sigma_k(M) \ge \sigma_k(\mathbb B^n\sqcup \mathbb B^n),
  \end{equation*}
  with equality if and only if $M=\mathbb B^n\sqcup \mathbb B^n$.
  In particular, for $d\geq 2$ this is always true.
\end{thm}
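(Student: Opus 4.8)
The strategy is to combine the Dirichlet–Neumann bracketing referred to in the excerpt with the cylinder eigenvalue lemma and the one-boundary result of Theorem \ref{thm:lowerboundrev1bdry}. A hypersurface of revolution $M$ with two boundary components $\Sp^{n-1}\times\{0\}$ and $\Sp^{n-1}\times\{d\}$ admits, by the quasi-isometry-to-a-cylinder observation of Section~\ref{warped}, collar neighborhoods of each boundary component. The plan is first to decompose $M$ along two hypersurfaces, splitting it into: a collar $C_0\cong \Sp^{n-1}\times[0,a_0]$ near the first boundary component, a collar $C_1\cong \Sp^{n-1}\times[0,a_1]$ near the second, and a middle piece $M'$; here $a_0+a_1\leq L$, and since $L\geq 2$ we may arrange $a_0,a_1\geq 1$ (splitting the collar of intrinsic length $L\geq 2$ into two halves each of length $\geq 1$). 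By Dirichlet–Neumann bracketing \eqref{ineq:compmixed}, $\sigma_k(M)$ is bounded below by the $k$-th eigenvalue of the problem on $C_0\sqcup C_1\sqcup M'$ with Steklov condition on $\Sigma$ and Dirichlet condition on the cutting hypersurfaces.

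The next step is to discard the middle piece (its mixed Steklov–Dirichlet spectrum only contributes eigenvalues that push things up, and since the relevant eigenfunctions on $M'$ carry no Steklov data on $\Sigma$, they enter only through the ordering of eigenvalues) and reduce to estimating, for each collar $C_i$, the mixed Steklov–Dirichlet eigenvalues of a warped cylinder of length $\geq 1$ with Steklov condition on one end and Dirichlet on the other. Here I would invoke Lemma \ref{lemma:EigenvalueCylinder}: on such a cylinder of length at least $1$, the mixed eigenvalues dominate the corresponding eigenvalues of the straight product cylinder $\Sp^{n-1}\times[0,1]$ with the same boundary conditions, and these in turn relate to $\sigma_k(\mathbb B^n)$. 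Indeed, the key point is that the mixed Steklov–Dirichlet problem on $\Sp^{n-1}\times[0,1]$ — or more precisely the relevant model — has spectrum bounded below by $\sigma_k(\mathbb B^n)$; this is exactly the mechanism already used in the proof of Theorem \ref{thm:lowerboundrev1bdry}, where a single collar of length $\geq 1$ (guaranteed there automatically) yields the comparison with the ball. Assembling: the mixed eigenvalues on $C_0\sqcup C_1$ form the union (with multiplicity) of the mixed eigenvalues on each $C_i$, each of which dominates the corresponding eigenvalue of one copy of $\mathbb B^n$, so the union dominates $\sigma_k(\mathbb B^n\sqcup\mathbb B^n)$.

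For the \textbf{equality case}: equality in Dirichlet–Neumann bracketing forces the cutting hypersurfaces to be invisible, i.e. the Steklov eigenfunctions of $M$ are already Dirichlet on them, which combined with unique continuation and the warped-product structure forces the profile function to be constant along the collars; equality in Lemma \ref{lemma:EigenvalueCylinder} forces each collar to be a straight product of length exactly realizing the model, and the rigidity in Theorem \ref{thm:lowerboundrev1bdry} then forces each collar-plus-cap to be isometric to $\mathbb B^n$. Following the profile function inward from both ends and using that the middle piece must collapse (its contribution would otherwise raise an eigenvalue strictly), one concludes $M=\mathbb B^n\sqcup\mathbb B^n$. The last sentence, that $d\geq 2$ implies $L\geq 2$, is immediate since the intrinsic distance between the two boundary components is at least their extrinsic distance, which is exactly $d$.

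The main obstacle I expect is the equality discussion: turning the chain of non-strict inequalities (bracketing, cylinder comparison, single-boundary rigidity) into a clean statement that $M$ is genuinely two disjoint balls — in particular ruling out a nontrivial middle piece and showing the warped metric has constant profile — requires care, since a priori the middle piece $M'$ could be anything, and one must argue that any positive-measure middle piece strictly increases some eigenvalue. The cleanest route is probably to note that if $M$ is not $\mathbb B^n\sqcup\mathbb B^n$ then strict inequality already occurs in one of the steps for some $k$, which (by monotonicity in the ordering) propagates to the claimed $\sigma_k$; but making "monotonicity in the ordering" precise with all the multiplicities is the fiddly part.
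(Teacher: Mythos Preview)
Your bracketing step has the inequality backwards. Imposing Dirichlet conditions on the cutting hypersurfaces shrinks the admissible test space (functions vanishing on the cuts glue to honest $H^1(M)$ functions), so by min-max this produces an \emph{upper} bound for $\sigma_k(M)$, not a lower bound; compare with inequality~\eqref{ineq:compmixed} in the paper, where $\sigma_k^D(A)$ sits on the right. The lower bound side of Dirichlet--Neumann bracketing comes from the \emph{Neumann} problem on the pieces, but then the collar eigenvalues are $\sqrt{\lambda_j}\tanh(\sqrt{\lambda_j}a_i)$ (Lemma~\ref{lemma:EigenvalueCylinder}), which are strictly smaller than $\sqrt{\lambda_j}$ and do not obviously dominate $\sigma_k(\mathbb B^n)$. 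So the decomposition-plus-bracketing route, as stated, does not get off the ground.

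You also misidentify ``the mechanism already used in the proof of Theorem~\ref{thm:lowerboundrev1bdry}.'' That proof does not go through the cylinder lemma or any mixed-problem bracketing. It works directly with the Fourier decomposition and the pointwise profile inequality $h(r)\ge 1-r$ (from \eqref{constraints_revol_typeA}) to compare Dirichlet energies termwise, and then transplants the eigenfunctions of $M$ to test functions on $\mathbb B^n$ via Lemma~\ref{lemma:sobolev}. The paper's proof of Theorem~\ref{thm:lowerboundrev2bdry} does exactly the same thing on \emph{both} ends: since $L\ge 2$ the intervals $[0,1]$ and $[L-1,L]$ are disjoint, one has $h(r)\ge 1-r$ on the first and $h(r)\ge r-L+1$ on the second by \eqref{constraints_revol_typeA}--\eqref{constraints_revol_typeB}, and the restriction of each Steklov eigenfunction of $M$ to these two intervals gives a test function on $\mathbb B^n\sqcup\mathbb B^n$. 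The energy comparison is then a one-line consequence of formula~\eqref{formulaenergy}, and the equality case falls out immediately: equality forces $h(r)=1-r$ on $[0,1]$ and $h(r)=r-L+1$ on $[L-1,L]$, hence $L=2$ and $M=\mathbb B^n\sqcup\mathbb B^n$. No bracketing, no middle piece, no multiplicity bookkeeping.
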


\begin{rem}
  The lower bounds that we have obtained for
  hypersurfaces of revolution do not require any
  control of the curvature. This also holds for the upper bounds that we obtain in Section~\ref{warped}.
  This is in contrast to the more general
  results given above and should also be
  compared with many of the known results (See \cite{prostu,esco1, Carreno}).
  In addition, our results for hypersurfaces of revolution do not depend on the volume of $M$, as
  opposed to those in \cite{ceg2}.
  Note also that for surfaces, the general bounds given by
    Theorem~\ref{thm:verygeneralupperbound} and Theorem~\ref{thm:generalupperbound}
    do not depend on the genus, as opposed to those presented in \cite{fraschoen,GPsurfaces}.
\end{rem}

\subsection*{Plan of the paper}
In Section~\ref{section:general} we will recall some general facts about the
Steklov problem and properties of its spectrum.
In Section~\ref{section:upperboundrevol} the proofs of
Theorem~\ref{thm:lowerboundrev1bdry},
and Theorem~\ref{thm:lowerboundrev2bdry} will
be presented. They are based on Fourier decomposition
and comparison arguments.
The proofs of Theorem~\ref{thm:verygeneralupperbound} and Theorem~\ref{thm:generalupperbound}
are more involved, and they are based on a method
that was introduced in \cite{cm}. They will
be presented in Section~\ref{section:verygeneralupperbound}. Finally, the
isospectrality of compact surfaces of revolution with connected boundary will be presented in an appendix.

 \section{Some general facts about Steklov and mixed
   problems} \label{section:general}

The Steklov eigenvalues of $(M,g)$ can be characterized by the following variational formula.
\begin{equation}
  \label{minmax}
  \sigma_j(M) = \min_{E \in \mathcal{H}_j} \max_{0 \neq u \in E} R_M(u),
\end{equation}
where $\mathcal{H}_j$ is the set of all $j$-dimensional subspaces in the Sobolev space $H^1(M)$
which are orthogonal to constants on $\Sigma$, and
\begin{equation*}
  \label{Rayleighquotient}
  R_M(u) = \frac{\int_{M} \vert \nabla u \vert^2 dV_M}{\int_{\Sigma} \vert u \vert^2 dV_{\Sigma}}
\end{equation*}
is the Rayleigh quotient.

In order to obtain upper or lower bounds for the spectrum, it is often
convenient to compare the Steklov spectrum with the spectrum of mixed
Steklov-Dirichlet or Steklov-Neumann problems on domains $A\subset M$
such that $\Sigma \subset A$. We denote by $\partial_IA$ the interior
boundary of $A$ (that is the intersection of the boundary of $A$ with
the interior of $M$) and we suppose that it is smooth.

The mixed Steklov-Neumann problem
on $A$ is the eigenvalue problem
\begin{gather*}
  \Delta f=0\mbox{ in } {A},\\
  \partial_\nu f=\sigma f\mbox{ on } \Sigma,\qquad
  \partial_\nu f=0 \mbox{ on }\ \partial_IA,
\end{gather*}
 where $\nu$ denotes the outward normal to $\partial A$.
The eigenvalues of this mixed problem form a discrete sequence
$$0=\sigma_0^N( A) \le \sigma_1^N( A)\leq\sigma_2^N(A)\leq\cdots\nearrow\infty,$$
and for each $j\geq 1$ the $j$-th eigenvalue is given by
\begin{gather*}
 \sigma_j^{N}(A)=\min_{E\in\mathcal{H}_{j}(A)}\max_{0\neq u\in E}\frac{\int_{M}|\nabla u|^2\,dV_M}{\int_{\Sigma}|u|^2\,dV_\Sigma},
\end{gather*}
where $\mathcal{H}_{j}(A)$ is the set of all $j$-dimensional subspaces
in the Sobolev space $H^1(A)$ which are orthogonal to constants on
$\Sigma$.

The mixed Steklov-Dirichlet problem
on $A$ is the eigenvalue problem
\begin{gather*}
  \Delta f=0\mbox{ in } {A},\\
  \partial_\nu f=\sigma f\mbox{ on } \Sigma,\qquad
  f=0 \mbox{ on }\ \partial_IA.
\end{gather*}
The eigenvalues of this mixed problem form a discrete sequence
$$0<\sigma_0^D( A)\leq\sigma_1^D(A)\leq\cdots\nearrow\infty,$$
and the $j$-th eigenvalue is given by
\begin{gather*}
 \sigma_j^{D}(A)=\min_{E\in\mathcal{H}_{j,0}(A)}\max_{0\neq u\in E}\frac{\int_{A}|\nabla u|^2\,dV_M}{\int_{\Sigma}|u|^2\,dV_\Sigma},
\end{gather*}
where $\mathcal{H}_{j,0}(A)$ is the set of all $(j+1)$-dimensional subspaces
in the Sobolev space
$H_0^1(A)=\{u\in H^1(A)\,:\, u=0 \mbox{ on }\partial_IA\}.$

\medskip

Comparisons with the Rayleigh quotient give the following
bracketing for each $j\in\N$:
\begin{gather}\label{ineq:compmixed}
\sigma_j^N(A) \le \sigma_j(M) \le \sigma_{j}^D(A).
\end{gather}
Note in particular that for $j=0$ this simply confirms that
$\sigma_0^N(A)=0\leq\sigma_0(A)<\sigma_0^D(A)$.

\medskip

In the particular case where $A$ is a cylinder, separation of
variables leads to the following which is similar to \cite[Lemma 6.1]{ceg2}.
\begin{lemma}\label{lemma:EigenvalueCylinder}
Let $\Sigma$ be a closed, connected Riemannian manifold, and let
$$0=\lambda_0<\lambda_1\leq\lambda_2\leq\cdots\nearrow\infty$$
be the spectrum of the
Laplace operator $\Delta_\Sigma$ on $\Sigma$. Let $(u_k)$ be an orthonormal basis of $L^2(\Sigma)$ such that
$$\Delta_{\Sigma}u_k=\lambda_ku_k.$$
Consider the cylinder
$C_L=\Sigma\times[0,L]$ of length $L>0$.
  \begin{enumerate}
  \item The Steklov eigenvalues of $C_L$ are
    $$0, 2/L, \sqrt{\lambda_k}\tanh(\sqrt{\lambda_k}L/2),
    \sqrt{\lambda_k}\coth(\sqrt{\lambda_k}L/2), \quad\mbox{ for }k \geq 2.$$
    The corresponding eigenfunctions are $1$, and $r-L/2$, as well as
    $$u_k(p)\cosh\left(\sqrt{\lambda_k}(r-L/2)\right),\quad
    u_k(p)\sinh\left(\sqrt{\lambda_k}(r-L/2)\right),\quad k\geq 2.$$

\item   The Steklov-Neumann (i.e. sloshing) eigenvalues on $C_L$
  are
  $$\sigma_k^N(L):=\sqrt{\lambda_k}\tanh(\sqrt{\lambda_k}L),\quad k\geq 0.$$
  with corresponding eigenfunctions
  $$u_k(p)\frac{\cosh(\sqrt{\lambda_k}(L-r))}{\cosh(\sqrt{\lambda_k}L)}.$$
  In particular $\sigma_0^N(L)=0$, with constant eigenfunction.
  \item The Steklov-Dirichlet eigenvalues on $C_L$ are
  $\sigma_1^D(L)=1/L$ with corresponding eigenfunction
  $1-r/L$ and for each $k\geq 2$,
  $$\sigma_k^D(L)=\sqrt{\lambda_k}\coth(\sqrt{\lambda_k}L).$$
  with corresponding eigenfunctions
  $$u_k(p)\frac{\sinh(\sqrt{\lambda_k}(L-r))}{\sinh(\sqrt{\lambda_k}L)}.$$
  \end{enumerate}
\end{lemma}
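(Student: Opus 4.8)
The plan is to solve the Steklov (and mixed) eigenvalue problems on the cylinder $C_L=\Sigma\times[0,L]$ by separation of variables, using the orthonormal eigenbasis $(u_k)$ of $\Delta_\Sigma$. Write a point of $C_L$ as $(p,r)$ with $p\in\Sigma$ and $r\in[0,L]$, and recall that the Laplacian splits as $\Delta_{C_L}=\Delta_\Sigma+\partial_r^2$ (with the sign convention that $\Delta_\Sigma u_k=\lambda_k u_k$ means $\Delta_\Sigma=-\mathrm{div}\,\nabla$, so harmonicity reads $\partial_r^2 f=\lambda_k f$ in the $k$-th mode). First I would look for harmonic functions of the product form $f(p,r)=u_k(p)\varphi_k(r)$. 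For $k=0$, $\lambda_0=0$ forces $\varphi_0''=0$, so $\varphi_0(r)=a+br$; the two boundary components are $\Sigma\times\{0\}$ and $\Sigma\times\{L\}$, and imposing $\partial_\nu f=\sigma f$ there (with $\partial_\nu=-\partial_r$ at $r=0$ and $\partial_\nu=+\partial_r$ at $r=L$) gives the eigenvalue $0$ with eigenfunction $1$ and the eigenvalue $2/L$ with eigenfunction $r-L/2$. For $k\geq 1$ (the statement writes $k\geq 2$ merely to account for a possibly multiple first nonzero Laplace eigenvalue), $\varphi_k''=\lambda_k\varphi_k$ gives $\varphi_k(r)=\alpha\cosh(\sqrt{\lambda_k}(r-L/2))+\beta\sinh(\sqrt{\lambda_k}(r-L/2))$, and it is convenient to center at $r=L/2$ so that the cosh mode is even and the sinh mode is odd under $r\mapsto L-r$.

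Next I would impose the Steklov conditions at both ends. By the symmetry of $C_L$ under $r\mapsto L-r$, the eigenspaces split into even and odd parts, so it suffices to treat the pure cosh and pure sinh solutions separately. For $f=u_k(p)\cosh(\sqrt{\lambda_k}(r-L/2))$, the condition at $r=L$ reads $\partial_r f=\sigma f$, i.e. $\sqrt{\lambda_k}\sinh(\sqrt{\lambda_k}L/2)=\sigma\cosh(\sqrt{\lambda_k}L/2)$, giving $\sigma=\sqrt{\lambda_k}\tanh(\sqrt{\lambda_k}L/2)$; the condition at $r=0$ is then automatic by evenness. For $f=u_k(p)\sinh(\sqrt{\lambda_k}(r-L/2))$, the same computation yields $\sigma=\sqrt{\lambda_k}\coth(\sqrt{\lambda_k}L/2)$. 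To see that these exhaust the Steklov spectrum, I would invoke completeness: the functions $\{u_k(p)\cosh(\sqrt{\lambda_k}(r-L/2)),\ u_k(p)\sinh(\sqrt{\lambda_k}(r-L/2))\}$ together with the two $k=0$ solutions span a dense subspace of the space of harmonic functions on $C_L$ with $L^2$ boundary data — indeed their boundary traces on $\Sigma\times\{0,L\}$ already span $L^2(\Sigma)\oplus L^2(\Sigma)$ since $(u_k)$ is an orthonormal basis of $L^2(\Sigma)$ — so the list above is the complete list of Steklov eigenpairs.

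For the mixed problems I would repeat the separation of variables with the interior boundary condition imposed at $r=L$ (so $\partial_I C_L=\Sigma\times\{L\}$ and the Steklov condition is at $\Sigma\times\{0\}$). In the $k$-th mode write $\varphi_k(r)=A\cosh(\sqrt{\lambda_k}(L-r))+B\sinh(\sqrt{\lambda_k}(L-r))$. For Steklov-Neumann, $\partial_r\varphi_k(L)=0$ forces $B=0$, and the Steklov condition $-\partial_r\varphi_k(0)=\sigma\varphi_k(0)$ gives $\sqrt{\lambda_k}\sinh(\sqrt{\lambda_k}L)=\sigma\cosh(\sqrt{\lambda_k}L)$, i.e. $\sigma_k^N(L)=\sqrt{\lambda_k}\tanh(\sqrt{\lambda_k}L)$, with the stated normalized eigenfunction; for $k=0$ this degenerates to $\varphi_0=\text{const}$ and $\sigma_0^N=0$. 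For Steklov-Dirichlet, $\varphi_k(L)=0$ forces $A=0$ (for $k\geq 1$), and the Steklov condition gives $\sqrt{\lambda_k}\cosh(\sqrt{\lambda_k}L)=\sigma\sinh(\sqrt{\lambda_k}L)$, i.e. $\sigma_k^D(L)=\sqrt{\lambda_k}\coth(\sqrt{\lambda_k}L)$; for the mode $k=0$, $\varphi_0$ is affine, vanishes at $r=L$, so $\varphi_0(r)=c(L-r)$ up to scale $1-r/L$, and $-\partial_r\varphi_0(0)=\sigma\varphi_0(0)$ gives $\sigma_1^D(L)=1/L$. The only genuinely non-formal point is the completeness argument used to conclude that separation of variables captures the entire spectrum; this is standard for product manifolds, and follows from the spectral theorem applied to the Dirichlet-to-Neumann operator together with the fact that $(u_k)$ is a complete orthonormal system on $\Sigma$, so I would state it briefly rather than belabor it.
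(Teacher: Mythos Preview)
Your proposal is correct and follows exactly the approach the paper indicates: the paper does not actually give a proof of this lemma, it merely states that ``separation of variables leads to the following which is similar to \cite[Lemma 6.1]{ceg2}'' and then records the result. Your computations carry out this separation of variables in full detail, and the completeness argument you sketch (that the boundary traces span $L^2(\Sigma)\oplus L^2(\Sigma)$, respectively $L^2(\Sigma)$, via the basis $(u_k)$) is precisely the standard justification one would supply.
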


\begin{ex}\label{example:mixed}
  Given $L\geq 0$ let $M_L$ be a compact manifold with connected boundary such
  that a neighborhood of the boundary is isometric to $\Sp^{n-1}\times
  [0,L]$. It follows from Inequality \eqref{ineq:compmixed} that
    \begin{gather*}
      \sigma_k^N(\Sp^{n-1}\times[0,L]) \le \sigma_k(M) \le \sigma_k^D(\Sp^{n-1}\times
            [0,L]).
    \end{gather*}
    As above, we have for $k\geq 1$ that
    \begin{gather*}
      \sigma_k^D(\Sp^{n-1}\times[0,L])=\sqrt{\lambda_k}\coth(\sqrt{\lambda_k}L)\\
      \sigma_k^N(\Sp^{n-1}\times[0,L])=\sqrt{\lambda_k}\tanh(\sqrt{\lambda_k}L).
    \end{gather*}
    In particular, when $L\to\infty$,
    $$\sigma_k(M_L)=\sqrt{\lambda_k}+o(1).$$
    For $n=2$, one has $\sigma_k(M_L)=\sqrt{\lambda_k}$ since all surfaces of
    revolution with connected boundary are Steklov isospectral (see Appendix \ref{section:isospectral}). For $n\geq 3$ this proves that
    $\sigma_k(M_L)\neq \sigma_k(\mathbb{B}^n).$ Indeed the Steklov
    spectrum of $\mathbb{B}^n$ is the sequence  of all positive integers (see \cite[Example 1.3.2]{gpsurvey}), but
    the eigenvalues of the Laplacian on the sphere $\Sp^{n-1}$ are not
    perfect squares (see \cite[Chapter III. C]{bgm}).
\end{ex}

In Section~\ref{warped} we will also need to compare the eigenvalues of
quasi-isometric cylinders.
\begin{obs}\label{obs:quasiisocontrol}
 Let $M$ be a compact manifold of dimension $n$, with smooth boundary
 $\Sigma$ and let $g_1,g_2$ be two Riemannian metrics on $M$ which coincide on the boundary $\Sigma$ and which are
 quasi-isometric with ratio $A\geq 1$, which means that for each $x\in
 M$ and $0\neq v \in T_xM$ we have
 $$
 \frac{1}{A} \le \frac{g_1(x)(v,v)}{g_2(x)(v,v)} \le A.
 $$
 Then the Steklov eigenvalues with respect to $g_1$ and $g_2$ satisfy the following inequality:
 $$
 \frac{1}{A^{\frac{n}2+1}}\le \frac{\sigma_k(M,g_1)}{\sigma_k(M,g_2)} \le A^{\frac{n}2+1}.
 $$
 The same conclusion holds for the mixed eigenvalues $\sigma_k^N$ and
 $\sigma_k^D$.
\end{obs}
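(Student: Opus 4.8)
The plan is to carry out the comparison directly at the level of Rayleigh quotients and then to invoke the Min-Max formula \eqref{minmax}. Since $g_1$ and $g_2$ are quasi-isometric with ratio $A$, the same holds for the induced metrics $g_1^*,g_2^*$ on the cotangent bundle, but with the two sides of the estimate exchanged: from $A^{-1}g_2\le g_1\le Ag_2$ on $TM$ one deduces $A^{-1}g_2^*\le g_1^*\le Ag_2^*$ on $T^*M$, and therefore, pointwise on $M$ and for every $u\in H^1(M)$,
\[
A^{-1}|\nabla_{g_2}u|^2_{g_2}\le|\nabla_{g_1}u|^2_{g_1}\le A|\nabla_{g_2}u|^2_{g_2}.
\]
For the Riemannian densities, comparing determinants of the matrices of $g_1$ and $g_2$ in a common chart gives $A^{-n/2}\,dV_{g_2}\le dV_{g_1}\le A^{n/2}\,dV_{g_2}$. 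Multiplying these two estimates yields
\[
A^{-(\frac n2+1)}\int_M|\nabla_{g_2}u|^2\,dV_{g_2}\le\int_M|\nabla_{g_1}u|^2\,dV_{g_1}\le A^{\frac n2+1}\int_M|\nabla_{g_2}u|^2\,dV_{g_2}.
\]

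Next I would observe that the denominator of the Rayleigh quotient is the same for both metrics: since $g_1$ and $g_2$ coincide along $\Sigma$, they induce the same Riemannian measure $dV_\Sigma$, so $\int_\Sigma|u|^2\,dV_\Sigma$ is unchanged. Hence $A^{-(\frac n2+1)}R_M^{g_2}(u)\le R_M^{g_1}(u)\le A^{\frac n2+1}R_M^{g_2}(u)$ for every admissible $u$. Because quasi-isometric metrics share the same Sobolev space $H^1(M)$ (with equivalent norms) and the admissibility constraint "orthogonal to the constants on $\Sigma$" only involves data on $\Sigma$, the family $\mathcal H_j$ of competitor subspaces appearing in \eqref{minmax} is literally the same for $g_1$ and $g_2$. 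Applying the Min-Max formula to each metric and inserting the pointwise comparison of Rayleigh quotients gives
\[
A^{-(\frac n2+1)}\,\sigma_k(M,g_2)\le\sigma_k(M,g_1)\le A^{\frac n2+1}\,\sigma_k(M,g_2),
\]
which is the assertion.

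For the mixed problems the same argument applies without change: the variational characterizations of $\sigma_k^N(A)$ and $\sigma_k^D(A)$ use the very same Rayleigh quotient, restricted to $H^1(A)$, respectively to $H^1_0(A)=\{u\in H^1(A):u=0\text{ on }\partial_IA\}$, and both of these spaces, together with the constraint spaces $\mathcal H_j(A)$ and $\mathcal H_{j,0}(A)$, depend only on the smooth structure and hence are unchanged under a quasi-isometry. Since $\Sigma$ lies in the region where $g_1=g_2$, the boundary term is again unaffected, so the same chain of inequalities goes through.

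I do not expect a genuine obstacle here; the only point that requires a little care is the bookkeeping of the exponents. The factor $A$ comes from the gradient term and the factor $A^{n/2}$ from the volume density, so together they produce $A^{\frac n2+1}$, and one must remember that passing from $TM$ to $T^*M$ reverses the direction of the quasi-isometry estimate while keeping the same constant $A$. Neither of these is a real difficulty.
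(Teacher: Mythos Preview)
Your argument is correct and is exactly the approach the paper indicates: the paper simply remarks that the observation follows directly from the Min-Max characterization \eqref{minmax} (citing related statements in the literature), and you have spelled out the standard details of that comparison. There is nothing to add.
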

This follows directly from the Min-Max characterization \eqref{minmax}
(see \cite[Proposition 32]{discretizationBCG} and \cite{Dodziuk} for
related statements).

\section{Bounds for hypersurfaces of revolution}
\label{section:upperboundrevol}

\subsection{Hypersurfaces of revolution}\label{warped}
By \emph{hypersurface of revolution} we mean there
is a parametrisation $\psi:\Sp^{n-1}\times [0,L]\rightarrow\R^{n+1}$ of $M$
given by
$$\psi(p,r)=h(r)p+z(r)e_{n+1},$$
where $\Sp^{n-1} \subset \R^{n}$ is the unit sphere, and the smooth functions
$h,z\in C^\infty([0,L])$ satisfy $h(0)=1$ and $h(r)>0$ for $0<r<L$.
The curve $$r\mapsto h(r)p+z(r)e_{n+1}$$ is assumed to be parametrized
by its arc-length, so that $h'^2+z'^2\equiv 1$. It
follows that  $|h'|\leq 1$ on $[0,L]$, and
integration leads to the following inequalities. For each $r\in (0,L)$,
\begin{gather}
  1-r\leq h(r)\leq 1+r,\label{constraints_revol_typeA}\\
    h(L)-r\leq h(L-r)\leq h(L)+r.\label{constraints_revol_typeB}
\end{gather}
In the coordinates given by the map $\psi$, the induced Riemannian metric $g$ on $[0,L]\times
\Sp^{n-1}$ has the form
\begin{gather*}
  g=dr^2+h(r)^2g_{0},
\end{gather*}
where $g_{0}$ is
the usual canonical metric on $\Sp^{n-1}$.
In particular, the situation where $M=\mathbb B^{n}\times\{0\}$ corresponds to
$h(r)=1-r$.
We note that the restriction of $g$ 
to $\Sp^{n-1}\times[0,L/2]$ is quasi-isometric to the product metric
$dr^2+g_{0}$ with ratio $(1-L/2)^{-2}$.

\begin{prop}\label{prop:upperboundrev1bdry}
  Let $M\subset\R^{n+1}$ be a hypersurface of revolution with
  connected boundary. Then,
  \begin{gather}\label{ineq:revolc}
    \frac{1}{4^{\frac{n}2+1}}\sigma_k^N(1/2)\leq\sigma_k(M,g) \le
  4^{\frac{n}2+1}\sigma_k^D(1/2),
  \end{gather}
  where $\sigma_k^N(1/2),\sigma_k^D(1/2)$ are the eigenvalues of mixed
  Steklov-Neumann and Steklov-Dirichlet problems on the cylinder
  $\Sp^{n-1}\times[0,1/2]$.
  In particular, there are constants $b(n)\geq a(n)>0$ such that
  \begin{gather}\label{ineq:compweyl}
      a(n) k^{1/(n-1)}\leq \sigma_k(M,g) \le b(n)
  k^{1/(n-1)}\quad\forall k\geq 1.
  \end{gather}
\end{prop}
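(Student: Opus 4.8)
The plan is to establish \eqref{ineq:revolc} by combining Dirichlet-Neumann bracketing with the quasi-isometry observation. Let $M$ be a hypersurface of revolution with connected boundary, so $M$ has a parametrisation $\psi:\Sp^{n-1}\times[0,L]\to\R^{n+1}$ with $h(0)=1$, $h(r)>0$ on $(0,L)$, and metric $g=dr^2+h(r)^2g_0$. First I would reduce to the case where $L\geq 1/2$: if $L<1/2$ then $h(r)\geq 1-r>1/2$ on $[0,L]$, but actually the cleanest route is to take the subcylinder $A=\Sp^{n-1}\times[0,1/2]\subset M$ (using $L\geq 1/2$; the case $L<1/2$ can be handled separately or folded in by noting $h$ stays bounded away from zero). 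On $A$ the metric is $g=dr^2+h(r)^2g_0$ with $1-r\leq h(r)\leq 1+r$ by \eqref{constraints_revol_typeA}, so on $[0,1/2]$ we have $1/2\leq h(r)\leq 3/2$, and hence $g$ restricted to $A$ is quasi-isometric to the product metric $g_{\mathrm{prod}}=dr^2+g_0$ with ratio $(3/2)^2\leq 4$ (in fact the bound $(1-L/2)^{-2}$ stated in the text gives ratio $\leq 4$ on $[0,1/2]$ when one uses the two-sided estimate $1/2\le h\le 3/2$; I will use the crude ratio $A=4$).

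Next I would apply the Dirichlet-Neumann bracketing \eqref{ineq:compmixed} with this choice of $A$:
\begin{gather*}
  \sigma_k^N(A,g)\leq\sigma_k(M,g)\leq\sigma_k^D(A,g).
\end{gather*}
Then, since $g$ and $g_{\mathrm{prod}}$ coincide on $\Sigma=\Sp^{n-1}\times\{0\}$ (because $h(0)=1$) and are quasi-isometric with ratio $4$ on $A$, Observation~\ref{obs:quasiisocontrol} applied to the mixed problems gives
\begin{gather*}
  \frac{1}{4^{n/2+1}}\sigma_k^N(A,g_{\mathrm{prod}})\leq\sigma_k^N(A,g),\qquad
  \sigma_k^D(A,g)\leq 4^{n/2+1}\sigma_k^D(A,g_{\mathrm{prod}}).
\end{gather*}
Since $(A,g_{\mathrm{prod}})=\Sp^{n-1}\times[0,1/2]$ is exactly the cylinder of length $1/2$, its mixed eigenvalues are $\sigma_k^N(1/2)$ and $\sigma_k^D(1/2)$, which yields \eqref{ineq:revolc}. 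One subtlety is the boundary: $A$ has interior boundary $\Sp^{n-1}\times\{1/2\}$ where the metrics also coincide since $h$ need not equal $1$ there — but this does not matter, as the quasi-isometry hypothesis of Observation~\ref{obs:quasiisocontrol} only requires agreement on the Steklov boundary $\Sigma$, which holds. I should also verify that the case $n=2$ and small $L$ (where $1-L/2$ could be nonpositive if $L\geq 2$) is not an issue: on a hypersurface of revolution with connected boundary one always has, by $h(0)=1$ and $|h'|\le 1$, that $h>0$ forces $L\le 1$ is false in general — rather, $L$ can be large, but $h(r)\ge 1-r$ only for $r\le 1$; nonetheless on the specific subcylinder $[0,1/2]$ the bound $1/2\le h\le 3/2$ always holds regardless of $L$, which is all that is needed, and $L\ge 1/2$ is automatic since a connected hypersurface of revolution closing up smoothly cannot have $L<1/2$ — or if it could, one simply shrinks the subcylinder accordingly. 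I will state the argument assuming $L\ge 1/2$ and remark that otherwise one rescales trivially.

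For the second assertion \eqref{ineq:compweyl}, I would invoke Lemma~\ref{lemma:EigenvalueCylinder}: the mixed eigenvalues on $\Sp^{n-1}\times[0,1/2]$ satisfy $\sigma_k^N(1/2)=\sqrt{\lambda_k}\tanh(\sqrt{\lambda_k}/2)$ and $\sigma_k^D(1/2)=\sqrt{\lambda_k}\coth(\sqrt{\lambda_k}/2)$ for $k\ge 2$, where $\lambda_k$ are the Laplace eigenvalues of $\Sp^{n-1}$. Since $\tanh$ is bounded between fixed positive constants and $1$ on $[\sqrt{\lambda_2}/2,\infty)$, and $\coth$ likewise between $1$ and a fixed constant, both $\sigma_k^N(1/2)$ and $\sigma_k^D(1/2)$ are comparable to $\sqrt{\lambda_k}$ up to dimensional constants. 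By Weyl's law for the Laplacian on the closed $(n-1)$-manifold $\Sp^{n-1}$ one has $\lambda_k\asymp k^{2/(n-1)}$, hence $\sqrt{\lambda_k}\asymp k^{1/(n-1)}$, and combining with \eqref{ineq:revolc} gives the two-sided bound \eqref{ineq:compweyl} with constants $a(n),b(n)$ depending only on $n$ (absorbing the $4^{n/2+1}$ factors and the bounds on $\tanh,\coth$, and the low-frequency terms $k=1$). The main obstacle — though it is minor — is being careful that all constants are genuinely dimensional: the quasi-isometry ratio $4$ is absolute, the $\tanh/\coth$ bounds on $[\sqrt{\lambda_2}/2,\infty)$ depend on $\lambda_2(\Sp^{n-1})$ which depends only on $n$, and the Weyl asymptotics constant depends only on $n$ and $|\Sp^{n-1}|$; so everything is clean.
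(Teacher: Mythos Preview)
Your proof is correct and follows essentially the same route as the paper: take the collar $A=\Sp^{n-1}\times[0,1/2]$, use \eqref{constraints_revol_typeA} to get the quasi-isometry with ratio $4$ to the product metric, combine the bracketing \eqref{ineq:compmixed} with Observation~\ref{obs:quasiisocontrol}, and then deduce \eqref{ineq:compweyl} from Lemma~\ref{lemma:EigenvalueCylinder} together with Weyl's law on $\Sp^{n-1}$. Your digression about whether $L\geq 1/2$ is unnecessary: since the boundary is connected one has $h(L)=0$, and $|h'|\leq 1$ with $h(0)=1$ forces $L\geq 1$, so the subcylinder $[0,1/2]$ always fits inside $M$ and no separate case or rescaling is needed.
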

\begin{proof}[Proof of \ref{prop:upperboundrev1bdry}]
  It follows from \eqref{constraints_revol_typeA} that on
  $[0,1/2]\times\Sp^{n-1}$ the metric $g=dr^2+h(r)^2g_0$ is
  quasi-isometric to $dr^2+g_0$ with ratio $4$. Together with the
  comparison inequality \eqref{ineq:compmixed} and Observation~\ref{obs:quasiisocontrol}, this leads to \eqref{ineq:revolc}.
  Inequality \eqref{ineq:compweyl} follows from Weyl's law for the eigenvalues
  $\lambda_k$ of the Laplacian on $\Sp^{n-1}$ and Lemma \ref{lemma:EigenvalueCylinder}, since $\tanh(r)=1+o(r^{-\infty})$  and $\coth(r)=1+o(r^{-\infty})$ as $r\to\infty$.
\end{proof}

\begin{rem} The above construction shows that our bound has no reason to be
  sharp. In particular, we could use a cylinder of different length to
  match the problem better. It is an \emph{open problem} to find the
  maximum of $\sigma_k$ among hypersurfaces of revolution.
\end{rem}

We will also obtain an upper bound for hypersurfaces of revolution with
two boundary components, which will be proved using Fourier
decomposition of a
function $f\in C^\infty(\Sp^{n-1}\times [0,L])$. See Proposition~\ref{prop:upperboundrev2bdry}
below. Note that these computations will also be
useful in the proof of the sharp lower bounds from Theorem~\ref{thm:lowerboundrev1bdry}
and Theorem~\ref{thm:lowerboundrev2bdry}.

Let $0=\lambda_0<\lambda_1\leq\lambda_2\leq\cdots\nearrow+\infty$ be the
spectrum of the Laplacian on $\Sp^{n-1}$ and consider $(S_j)_{j\in\N}$
a corresponding orthonormal basis of $L^2(\Sp^{n-1})$.
Any smooth function $u$ on $M$ admits a Fourier series expansion
\begin{equation*}
  u(r,p) = \sum_{j=0}^{\infty} a_j(r) S_j(p).
\end{equation*}

We also normalize the function $u$ by $\int_{\Sigma} u(r,p)^2 \, dV_\Sigma = 1$ so that
$\sum_{j=1}^{\infty} a_j(0)^2 = 1$.
We have that
$$du(r,p) = \sum_{j=0}^{\infty} (a_j'(r)S_j(p) dr + a_j(r) dS_j(p)),$$
hence
\begin{align}\label{formulaenergy}
  \Vert du(r,p) \Vert_{g}^2
  &=
  \sum_{j=0}^{\infty} \int_0^L \Big(a_j'(r)^2 + \frac{a_j(r)^2 \lambda_j(\mathbb{S}^{n-1})}{h(r)^2}\Big) h(r)^{n-1} \, dr\\
  &=
  \sum_{j=0}^{\infty} \int_0^L (a_j'(r)^2 h(r)^{n-1} + a_j(r)^2 h(r)^{n-3}\lambda_j(\mathbb{S}^{n-1})) \, dr.\nonumber
\end{align}

\begin{prop}\label{prop:upperboundrev2bdry}
  Let $M\subset\R^{n+1}$ be a hypersurface of revolution with dimension
  $n\geq 3$ and
  boundary $\Sp^{n-1}\times\{0\}\cup
  \Sp^{n-1}\times\{d\}\subset\R^{n+1}$.
  Let $L>0$ be the intrinsic distance between these two
  components. For each $k\in\N$,
  \begin{equation*}
    \sigma_k(M) \leq (1+L)^{n-1}\sigma_k([0,L]\times\Sp^{n-1}),
  \end{equation*}
  where the right-hand side tends to 0 when $L\to 0$.
\end{prop}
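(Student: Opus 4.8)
The plan is to compare the induced metric $g = dr^2 + h(r)^2 g_0$ on $[0,L]\times\Sp^{n-1}$ with the product metric $\bar g = dr^2 + g_0$ of the cylinder, exploiting the Fourier energy identity \eqref{formulaenergy}. First I would record that both boundary spheres are unit spheres, so $h(0)=h(L)=1$; hence $g$ and $\bar g$ restrict to the same metric $g_0$ on $\Sigma$, and, being also quasi-isometric (because $h$ is continuous and strictly positive on the compact interval $[0,L]$), they yield the same Sobolev space $H^1(M)$ and the same admissible subspaces $\mathcal H_k$ in the min-max \eqref{minmax}. In particular, the boundary term $\int_\Sigma u^2\,dV_\Sigma$ in the Rayleigh quotient $R_M(u)$ is unchanged when we pass from $g$ to $\bar g$, so only the numerators need to be compared.

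Next I would estimate the numerators. Writing $u(r,p) = \sum_j a_j(r)S_j(p)$, formula \eqref{formulaenergy} gives $\int_M|\nabla u|_g^2 = \sum_j\int_0^L\bigl(a_j'(r)^2 h(r)^{n-1} + a_j(r)^2 h(r)^{n-3}\lambda_j\bigr)\,dr$, and the same formula with $h\equiv1$ computes $\int_M|\nabla u|_{\bar g}^2 = \sum_j\int_0^L\bigl(a_j'(r)^2 + a_j(r)^2\lambda_j\bigr)\,dr$. Since $0 < h(r)\le 1+r\le 1+L$ on $[0,L]$ by \eqref{constraints_revol_typeA}, and since the hypothesis $n\ge3$ makes both exponents $n-1$ and $n-3$ nonnegative, I can bound $h(r)^{n-1}\le(1+L)^{n-1}$ and $h(r)^{n-3}\le(1+L)^{n-3}\le(1+L)^{n-1}$; comparing the two energies term by term then yields $\int_M|\nabla u|_g^2\le(1+L)^{n-1}\int_M|\nabla u|_{\bar g}^2$. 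Together with the equality of boundary integrals this shows that $R_M(u)$ computed with $g$ is at most $(1+L)^{n-1}$ times $R_M(u)$ computed with $\bar g$, for every admissible $u$; feeding this into \eqref{minmax} gives $\sigma_k(M)\le(1+L)^{n-1}\sigma_k([0,L]\times\Sp^{n-1})$.

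To finish, I would check that $\sigma_k([0,L]\times\Sp^{n-1})\to0$ as $L\to0$ (the stated limit then follows since $(1+L)^{n-1}\to1$). For this I would use the $k$ test functions obtained from the first $k$ nonconstant eigenfunctions $S_1,\dots,S_k$ of $\Delta_{\Sp^{n-1}}$ (with eigenvalues $0<\lambda_1\le\cdots\le\lambda_k$, normalized in $L^2(\Sp^{n-1})$), viewed as functions on the cylinder that are independent of $r$: they lie in $\mathcal H_k$, have pairwise orthogonal gradients, are pairwise orthogonal on $\Sigma=\Sp^{n-1}\times\{0,L\}$, and each has Rayleigh quotient $L\lambda_j/2$, so $\sigma_k([0,L]\times\Sp^{n-1})\le L\lambda_k/2\to0$ (this can also be read off Lemma~\ref{lemma:EigenvalueCylinder}). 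I do not expect a serious obstacle in this argument; the one delicate point is the estimate on $h(r)^{n-3}$, which is exactly where $n\ge3$ is used: in the interior $h$ admits no lower bound, so for $n=2$ the factor $h^{n-3}=h^{-1}$ is uncontrolled, and this is the reason for the dimension hypothesis.
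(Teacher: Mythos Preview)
Your argument is correct and is essentially the same as the paper's: both use the Fourier energy identity \eqref{formulaenergy} together with the bound $h(r)\le 1+L$ and the nonnegativity of the exponents $n-1,\ n-3$ (from $n\ge 3$) to compare the Dirichlet energy on $(M,g)$ with that on the flat cylinder, and then conclude via the min-max \eqref{minmax}. You are in fact slightly more thorough than the paper, since you justify why the boundary integrals and the admissible subspaces coincide (via $h(0)=h(L)=1$ and quasi-isometry) and you also supply a proof of the limit $\sigma_k(C_L)\to 0$ as $L\to 0$, which the paper states but does not argue.
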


\begin{proof}[Proof of Proposition~\ref{prop:upperboundrev2bdry}]
  Let $u$ be a solution of the Steklov problem on the cylinder
$C_L=[0,L]\times\Sp^{n-1}$, normalized by $\int_{\partial C_L}u^2=1.$
The Dirichlet energy of $u$ for the metric $g=dr^2+h(r)^2g_{0}$ is
given by formula \eqref{formulaenergy}, and it follows from
$h(r)<1+r\leq 1+L$ and $n\geq 3$
that
\begin{align*}
  \Vert du(r,p) \Vert^2
  &\leq
  \sum_{j=0}^{\infty} \int_0^L (a_j'(r)^2 (1+L)^{n-1} + a_j(r)^2
  (1+L)^{n-3}\lambda_j(\mathbb{S}^{n-1})) \, dr\nonumber\\
  &\leq (1+L)^{n-1}
  \sum_{j=0}^{\infty} \int_0^L (a_j'(r)^2 +
  a_j(r)^2\lambda_j(\mathbb{S}^{n-1})) \, dr\\
  &=(1+L)^{n-1}\sigma_k(C_L).
\end{align*}
Using the Min-Max characterization \eqref{minmax} completes the  proof.
\end{proof}

\subsection{Lower bounds for revolution hypersurfaces with connected boundary}

The goal of this section is to prove Theorem~\ref{thm:lowerboundrev1bdry}. The idea is to transplant a function of
$M$ to a function on the unit ball and to take it as a test
function for the Steklov operator. However, this function is in general not continuous at the
origin, but it is in the Sobolev space $H^1(\mathbb{B}^n)$ because the
capacity of a point is $0$.
\begin{lemma}\label{lemma:sobolev}
If $f\in C^\infty(\mathbb{B}^n\setminus\{0\})$ is bounded and satisfies $\Vert\nabla f\Vert_{L^2(\mathbb{B}^n)}<\infty$, then $f\in H^1(\mathbb{B}^n)$.
\end{lemma}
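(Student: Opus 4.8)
The plan is to show that $f\in H^1(\mathbb{B}^n)$ by producing, for the given bounded function $f$ with $\nabla f\in L^2$, a sequence $f_\varepsilon\in H^1(\mathbb{B}^n)$ converging to $f$ in the $H^1$-norm. Since $f$ is already known to be smooth away from the origin, bounded, and with square-integrable gradient, the only issue is the behaviour at the single point $0$; the slogan is that a point has zero capacity in dimension $n\geq 2$, so it cannot obstruct $H^1$-membership. First I would fix a logarithmic (for $n=2$) or power-type (for $n\geq 3$) cutoff. Concretely, for $n\geq 3$ let $\chi_\varepsilon$ be a smooth function that vanishes on $B(0,\varepsilon)$, equals $1$ outside $B(0,\sqrt\varepsilon)$, and satisfies $|\nabla\chi_\varepsilon|\leq C/(|x|\log(1/\varepsilon))$ or more simply the standard $\chi_\varepsilon(x)=\eta(|x|/\varepsilon)$ with $|\nabla\chi_\varepsilon|\leq C/\varepsilon$ supported on the annulus $\varepsilon\leq|x|\leq 2\varepsilon$; then set $f_\varepsilon=\chi_\varepsilon f$, which is smooth on all of $\mathbb{B}^n$ (it vanishes near $0$) and hence lies in $H^1(\mathbb{B}^n)$.

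Next I would estimate $\|f-f_\varepsilon\|_{H^1}$. We have $f-f_\varepsilon=(1-\chi_\varepsilon)f$, and
\[
\nabla(f-f_\varepsilon)=(1-\chi_\varepsilon)\nabla f-f\,\nabla\chi_\varepsilon.
\]
The term $(1-\chi_\varepsilon)\nabla f$ is supported in $B(0,2\varepsilon)$ and dominated by $|\nabla f|\in L^2$, so it tends to $0$ in $L^2$ by dominated convergence. For the term $f\,\nabla\chi_\varepsilon$, boundedness of $f$ by some constant $M$ gives
\[
\int_{\mathbb{B}^n}|f\,\nabla\chi_\varepsilon|^2\,dV\leq M^2\int_{\varepsilon\leq|x|\leq 2\varepsilon}|\nabla\chi_\varepsilon|^2\,dV\leq M^2\cdot\frac{C}{\varepsilon^2}\cdot\omega_{n-1}\,(2\varepsilon)^n\leq C'M^2\varepsilon^{n-2},
\]
which tends to $0$ as $\varepsilon\to 0$ precisely because $n\geq 3$ (for $n=2$ one instead uses the logarithmic cutoff, for which the analogous integral is $O(1/\log(1/\varepsilon))\to 0$). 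Similarly $\|f-f_\varepsilon\|_{L^2}\to 0$ since $1-\chi_\varepsilon$ is supported in a shrinking ball and $f$ is bounded. Thus $f_\varepsilon\to f$ in $H^1$, and since $H^1(\mathbb{B}^n)$ is complete (a closed subspace of itself), $f\in H^1(\mathbb{B}^n)$.

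The main obstacle, such as it is, is handling the dimension $n=2$ case, where the naive linear cutoff on an $\varepsilon$-annulus gives an energy contribution that is merely $O(1)$ rather than $o(1)$; this is exactly the borderline capacity-zero phenomenon and must be resolved by choosing the logarithmic cutoff $\chi_\varepsilon(x)=\min\{1,\max\{0,\log(|x|/\varepsilon)/\log(1/\sqrt\varepsilon)\}\}$ (or a smoothed version thereof), whose Dirichlet energy over the annulus $\varepsilon\leq|x|\leq\sqrt\varepsilon$ is $O(1/\log(1/\varepsilon))$. One should also check the minor point that $f_\varepsilon$ genuinely lies in $H^1(\mathbb{B}^n)$ — which is immediate since it is Lipschitz and bounded on $\mathbb{B}^n$, being smooth on the complement of a neighbourhood of $0$ and identically $0$ near $0$ — and that the limit is taken in the right space so that the conclusion $f\in H^1(\mathbb{B}^n)$ follows. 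Everything else is routine dominated-convergence bookkeeping.
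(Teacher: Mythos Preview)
The paper does not actually supply a proof of this lemma: it is stated immediately after the sentence ``it is in the Sobolev space $H^1(\mathbb{B}^n)$ because the capacity of a point is $0$'' and is then used without further justification. Your argument is the standard way of making that capacity statement precise, and it is correct. The cutoff construction and the estimate $\int|\nabla\chi_\varepsilon|^2=O(\varepsilon^{n-2})$ for $n\geq 3$ (and $O(1/\log(1/\varepsilon))$ for $n=2$ via the logarithmic cutoff) are exactly what ``a point has zero $H^1$-capacity'' means, and the dominated-convergence handling of $(1-\chi_\varepsilon)\nabla f$ is fine.

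One small logical clean-up: when you write ``$f_\varepsilon\to f$ in $H^1$'' you are implicitly using the classical gradient of $f$ on $\mathbb{B}^n\setminus\{0\}$ before you know it agrees with the distributional gradient on $\mathbb{B}^n$. The tidy phrasing is to observe that $(f_\varepsilon)$ is Cauchy in $H^1$ (your estimates give this directly), hence converges to some $g\in H^1$; since $f_\varepsilon\to f$ in $L^2$ one gets $g=f$ a.e., so $f\in H^1(\mathbb{B}^n)$ and its weak gradient coincides a.e.\ with the classical one. This is what you clearly intend, and it is routine. Note also that in the paper's application the lemma is only invoked for $n\geq 3$, so the more delicate $n=2$ case you treat is not strictly needed there, though it does make the lemma hold as stated.
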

\begin{proof}[Proof of Theorem~\ref{thm:lowerboundrev1bdry}]
We use the fact that $n\ge 3$, because, as mentioned above, all
hypersurfaces of revolution with connected boundary
are isospectral to the disk.

Recall that $h$ is defined on
$M=[0,L]\times \Sp^{n-1}$ and we have $h(0)=1$, $\vert h'(r)\vert \le
1$,  $h(L)=0$ and $L\ge 1$ since the revolution manifold $M$ has
connected boundary. Recall from \eqref{constraints_revol_typeA} that
this implies that on $[0,1]$,
we have $h(r)\ge 1-r$. Substituting this into \eqref{formulaenergy}, using
$\sum_{j=1}^{\infty} a_j(0)^2 = 1$, and the fact that $n\ge 3$
leads to
\begin{align}\label{relation}
  \sigma_k(M)&=\Vert du(r,p) \Vert_{g}^2\\
  &\geq
  \sum_{j=0}^\infty\int_0^1 (a_j'(r)^2 (1-r)^{n-1} + a_j(r)^2 (1-r)^{n-3}\lambda_j(\mathbb{S}^{n-1})) \, dr.\nonumber
\end{align}
Consider the first $k$ normalized eigenfunctions $u_1,...,u_k$ of $M$
corresponding to the first $k$ positive Steklov eigenvalues
$\sigma_1(M), \dots, \sigma_k(M)$. To each of them associate a
function on the unit ball which is orthogonal to the constant functions on
$\mathbb{S}^{n-1}=\partial M$ simply by taking the restriction to the
ball ($r \in [0,1]$).
Despite not being continuous at $r=1$, it follows from Lemma \ref{lemma:sobolev} that
these functions can be considered as test
functions for the Steklov operator on the ball.
It follows from the Min-Max principle \eqref{minmax} that
$$
\sigma_k(\mathbb B^n)= \min_{E\in\mathcal{H}_k} \max_{\{u \in E, u\not=0\}} R_{\mathbb B^n}(u).
$$
As a specific
vector space, we choose the space $E_0$ given by the restriction of
the first $k$ eigenfunctions $u_1,...,u_k$ of $M$ to the ball $\mathbb
B^n$. Then we have
$$
\sigma_k(\mathbb B^n) \le \max_{\{u \in E_0, u\not=0\}} R_{\mathbb B^n}(u),
$$
and by (\ref{relation}), for each $u \in \{u_1,...,u_k\}$,
$$
R_M(u) \ge R_{\mathbb B^n}(u\bigl|\bigr._{\mathbb B^n}).
$$
The proof is completed by observing that
$$
\sigma_k(M)= \max_{\{u \in \{u_1,...,u_k\}\}}R_M(u) \ge \max_{\{u \in E_0, u\not=0\}} R_{\mathbb B^n}(u) \ge \sigma_k(\mathbb B^n).
$$
In the case of equality, we must have $L=1$ and $h(r)=1-r$ which corresponds to the unit ball $\mathbb B^n$.
\end{proof}

\subsection{Lower bounds for hypersurface of revolution with two
  boundary components}

The goal of this section is to prove Theorem~\ref{thm:lowerboundrev2bdry}.
The proof is similar to the proof of
Theorem~\ref{thm:lowerboundrev1bdry}.

\begin{proof}[Proof of Theorem~\ref{thm:lowerboundrev2bdry}]
  We proceed by comparison with the metric $dr^2+(1-r)^2g_0$ for $r
  \in [0,1]$, as well as  with the metric
  $dr^2+ (r-L+1)^2$ for $r \in [L-1,L]$.

  Let $\lambda_j=\lambda_j(\Sp^{n-1})$ be the eigenvalues of Laplace
  operator on the sphere $\Sp^{n-1}$.
  For $k \in \N$, let $\{a_j(r)\}_{j \in \N}$ be such that $\sum_{j=1}^{\infty} a_j(0)^2 = 1$ and
\begin{equation*}
  \sigma_k(M) =
  \sum_{j=1}^{\infty} \int_0^L (a_j'(r)^2 h(r)^{n-1} + a_j(r)^2 h(r)^{n-3}\lambda_j) \, dr.
\end{equation*}
By \eqref{constraints_revol_typeA} and \eqref{constraints_revol_typeB}, and since $L \ge 2$,
\begin{align}\label{e1.5a}
  \sigma_k(M) & \geq \sum_{j=1}^{\infty} \int_0^1 (a_j'(r)^2 (1-r)^{n-1} + a_j(r)^2 (1-r)^{n-3}\lambda_j) \, dr \\
  & + \sum_{j=1}^{\infty} \int_{L-1}^L (a_j'(r)^2 (r-L+1)^{n-1} + a_j(r)^2 (r-L+1)^{n-3}\lambda_j) \, dr.\notag
\end{align}
Let $u_1, \dots, u_k$
be the normalized Steklov eigenfunctions corresponding to $\sigma_1(M),
\dots, \sigma_k(M)$. These give rise to $k$ test functions on $\mathbb{B}^{n} \sqcup \mathbb{B}^{n}$ by restricting
$r$ to $[0,1] \sqcup [L-1,L]$. These test functions are not continuous
at the centers of these balls, but Lemma \ref{lemma:sobolev} shows that this is not a problem.
Call this collection of functions $E_0$. Then,
by \eqref{e1.5a} and the Min-Max characterization \eqref{minmax}, it
follows that
\begin{align*}
\sigma_k(\mathbb{B}^{n} \sqcup \mathbb{B}^{n})
&\leq \max_{u \in E_0} R_{\mathbb{B}^{n} \sqcup \mathbb{B}^{n}}(u) \notag\\
&\leq \max_{u \in E_0} R_{M}(u) = \sigma_k(M).
\end{align*}
In the case of equality, we must have $L=2$ and $h(r)=1-r$ which corresponds to
the case of $\mathbb{B}^n \sqcup \mathbb{B}^n$.
\end{proof}

\section{Upper bounds for general submanifolds}
\label{section:verygeneralupperbound}

\subsection{Some metric geometry}
The method we will use to obtain upper bounds for Steklov
eigenvalues is to construct disjointly supported test functions for the Rayleigh
quotient. These functions are obtained by constructing disjoint
domains that are both ``heavy enough'' and
far enough away from each other. The test function
associated to one domain takes the value $1$ on it, and $0$ outside a convenient neighbourhood. Our main
task is therefore to construct such domains. This is the goal of the
next Lemma.
\begin{lemma}\label{CMrevisited}
  Let $(X,d,\mu)$ be a complete, locally compact metric measured
  space, where $\mu$ is a non-atomic finite measure. Assume that for all
  $r>0$, there exists an integer $C$ such that each ball of radius
  $r$ can be covered by $C$ balls of radius $r/2$.
  Let $K>0$. If there
  exists a radius $r>0$ such that,
  for each $x \in X$
  $$
  \mu(B(x,r)) \le \frac{\mu(X)}{4C^2K},
  $$
  then, there exist $\mu$-measurable subsets $A_1,...,A_K$ of $X$
  such that, $\forall i\le K$, $\mu(A_i)\ge \frac{\mu(X)}{2CK}$
  and, for
  $i\not =j$, $d(A_i,A_j) \ge 3r$.
\end{lemma}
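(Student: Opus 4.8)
The plan is to run a greedy "packing–then–enlarging" argument in two stages. First I would build a maximal $r$-separated family of points $\{x_1, x_2, \dots\}$ in $X$; that is, a maximal collection with $d(x_i, x_j) \ge r$ for $i \neq j$. Maximality guarantees that the balls $B(x_i, r)$ cover $X$, so $\sum_i \mu(B(x_i,r)) \ge \mu(X)$, while the hypothesis $\mu(B(x,r)) \le \mu(X)/(4C^2K)$ forces this family to contain at least $4C^2K$ points. The disjoint balls $B(x_i, r/2)$ are the building blocks: the separation $d(x_i,x_j)\ge r$ already gives $d(B(x_i,r/2), B(x_j,r/2)) \ge 0$, but I will need more separation, so these small balls are only a bookkeeping device for counting mass.

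The second stage is to select a sub-collection that is mutually $7r$-separated (so that the enlarged sets will be $3r$-apart after we thicken) and then fatten each chosen center into a set of the required mass. I would iterate: pick $x_{i_1}$, discard every $x_j$ with $d(x_{i_1}, x_j) < 7r$, pick the next surviving center $x_{i_2}$, and so on. The key quantitative point is that the discarded cluster around each chosen center is small: a ball $B(x_{i_\ell}, 7r)$ can be covered by $C^4$ balls of radius $7r/16 < r/2$ (apply the doubling hypothesis four times, halving the radius each time), hence contains at most $C^4$ of the $r$-separated points — wait, I want the bound in terms of $C^2$ to match the hypothesis, so instead I cover $B(x_{i_\ell}, 7r)$ by balls of radius $r$ after two halvings would give radius $7r/4 > r$; the clean route is: $B(x,7r)$ is covered by $C^3$ balls of radius $7r/8$, and each such ball, being of radius $< r$, contains at most one point of an $r$-separated set only if its radius is $\le r/2$, so I take one more halving to get $C^4$ balls of radius $7r/16 < r/2$, each containing $\le 1$ separated point. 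Thus each elimination round removes at most $C^4$ centers; since we started with $\ge 4C^2K$ and in fact I should simply start the packing at the scale needed so that the count works out — the honest statement is that the number of selected centers is at least $(\text{number of }r\text{-separated points})/C^4$, and by adjusting the covering estimate (covering $B(x,7r)$ by $C^2$ balls of radius $7r/4$ won't help), the cleanest is to note $B(x,7r)\subset$ union of $C^{j}$ balls of radius $7r/2^j$, choose $j$ minimal with $7r/2^j \le r/2$, i.e. $j=4$, giving the bound $C^4$. To land exactly on $\mu(X)/(2CK)$ and $K$ sets, I would instead do the packing and the covering at a common scale: the paper's constants ($4C^2K$, $2CK$, separation $3r$) suggest selecting an $r$-separated maximal set, then passing to points that are pairwise $3r$-separated by removing, around each kept point, the others inside $B(\cdot,3r)$, which is covered by $C^2$ balls of radius $3r/4 < r$... again radius must be $\le r/2$; covering $B(x,3r)$ by $C^3$ balls of radius $3r/8 < r/2$ gives $\le C^3$ removed points per round. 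I will reconcile the exponent with the stated $C^2$ by using that a ball of radius $3r$ is covered by $C^{2}$ balls of radius $3r/4$ and then observing $3r/4$-balls are small enough relative to the packing scale if we had packed at scale $3r/2$; the bottom line is a bounded-overlap count, and I will fix the constant to $C^2$ by packing at the right scale at the outset.

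Finally, with $\ge 2CK$ pairwise $3r$-separated kept centers $y_1,\dots,y_{2CK}$ in hand (having absorbed constants so the count is at least $2CK$), I would take $A_i$ to be a union of roughly $2C$ of the disjoint $r/2$-balls $B(x_j,r/2)$ clustered near $y_i$ — or more simply define $A_i = B(y_i, r)$ and use the lower mass bound coming from the fact that these balls were the ones surviving the selection. To guarantee $\mu(A_i) \ge \mu(X)/(2CK)$ I would group the $4C^2K$ original packing points into $K$ consecutive blocks of $\ge 4C^2K/K = 4C^2$ points each that are geographically clustered; since each contributes a disjoint $r/2$-ball, a block would give mass $\ge$ (number in block)$\cdot \mu_{\min}$, which is not directly bounded below — so instead the mass lower bound must come from below via: the union $\bigcup_j B(x_j,r)$ equals $X$, there are $N \ge 4C^2K$ of them, so by pigeonhole some selection of $N/(2CK) \ge 2C$ of them has total measure $\ge \mu(X)/(2CK)$; I assemble $A_i$ from such a sub-collection, choosing the collections to be far apart using the $3r$-separation already arranged. \textbf{The main obstacle} is precisely this bookkeeping: simultaneously guaranteeing (a) $K$ sets, (b) each of mass $\ge \mu(X)/(2CK)$, and (c) mutual distance $\ge 3r$, while keeping all multiplicative losses inside the single factor $C^2$ in the hypothesis. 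The doubling/covering hypothesis is the tool that converts "separated" into "few," and the art is to spend the covering constant frugally; I expect the delicate step to be choosing the packing radius (some fixed multiple of $r$) so that one application of the covering property, squared, absorbs both the separation-to-counting loss and the pigeonhole loss exactly into $C^2$.
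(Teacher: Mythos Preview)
The paper does not give a proof of this lemma at all: immediately after the statement it records that ``This result was developed in \cite[Lemma 2.2 and Corollary 2.3]{cm}'' and that the present formulation ``is inspired by that of \cite[Lemma 2.1]{ceg1}'', and then moves on to use the lemma as a black box. So there is no in-paper argument to compare against; the proof lives in the cited references.

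As for your plan itself, it contains the right ingredients (packing, the covering constant, pigeonhole), but the gap you flag at the end is genuine and is not merely bookkeeping. Starting from a maximal $r$-separated net gives you control of the \emph{number} of net points and of the \emph{total} mass of their $r$-balls, but no lower bound on the mass of any \emph{individual} ball; you noticed this yourself (``which is not directly bounded below''). Your proposed fix --- grouping $\sim 2C$ of the small balls into each $A_i$ --- collides head-on with the separation requirement: once you take a union of several $r/2$-balls, you lose the $3r$-distance between the resulting $A_i$'s unless the balls in each group are already geographically clustered, and nothing in a bare maximal-net construction arranges that. Repeated attempts to match the exponent of $C$ (you try $C^2$, $C^3$, $C^4$) are a symptom of this: the counting losses in your scheme really are worse than $C^2$, so the hypothesis as stated would not suffice.

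The argument in the cited references proceeds differently: rather than packing by cardinality and then searching for mass, it selects the $A_i$ greedily \emph{by mass}. One picks $A_1$ inside $X$ with $\mu(A_1)$ of the target size, removes a $4r$-thickening of $A_1$ (whose excess mass is controlled by two applications of the covering hypothesis, $4r\to 2r\to r$, whence the factor $C^2$), and repeats on the remainder; the non-atomicity of $\mu$ is what lets one hit the target mass exactly, and the removed mass at each step is at most $\mu(X)/(4K)$, so after $K$ steps at least half of $\mu(X)$ survives and the procedure never stalls. If you want to reconstruct the details, consult the two references the paper points to.
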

This result was developed in \cite[Lemma 2.2 and Corollary 2.3]{cm}.
The version given above is inspired by that of \cite[Lemma 2.1]{ceg1}, with
a further simplification. Indeed,
the original statement involves a number $N(r)\in\N$ which depends on
the radius rather than a packing constant $C$.
In our present
setting, because the ambient space is $X=\R^n$ with its usual
Euclidean distance, it is clear that the packing constant $C$ does not
depend on the radius. In fact, $C=C(n):=32^n$ is a good choice.
Note also that these ideas are
explained with more details in \cite{ColboisMTL}.

The measure $\mu$ will be the measure
associated to the fixed submanifold $\Sigma$. That is, for a Borelian
subset $\mathcal{O}$ of $\R^{m}$, we take (as in \cite{ceg1})
$\mu(\mathcal O)=\int_{\Sigma \cap \mathcal O} dV_\Sigma$.
In particular, $\mu (\Sigma)$
is the usual volume $\vert \Sigma \vert$ of $\Sigma$. The number $K$
is related to the number of eigenvalues we want to estimate. In order
to estimate $\sigma_k$, it turns out that we need to begin with the
construction of $(2k+2)$ test functions, so we will take $K=2k+2$.

The proof of Theorem~\ref{thm:verygeneralupperbound} will be an
application of Lemma \ref{CMrevisited}. The proof of Theorem
\ref{thm:generalupperbound} is more tricky: we will first construct a
family of disjoint balls, following the approach used
in \cite{ceg1}, and
only in a further step, we will use Lemma \ref{CMrevisited} in order
to solve a particular case.

\subsection{Proof of the upper bound for general submanifolds}
\label{section:proofverygeneral}
The goal of this section is to prove Theorem~\ref{thm:verygeneralupperbound}.
The constant $A$ which appears in its
statement will depend on the distortion of a connected submanifold $N\subset\R^m$:
\begin{gather}\label{def:distortion}
  \mbox{disto}(N):=\sup_{x\neq y\in N}\frac{d_N(x,y)}{\|x-y\|},
\end{gather}
where $d_N$ is the usual geodesic distance on $N$ and $\|\cdot\|$ is the usual Euclidean norm.

\begin{proof}
  Let $\Sigma_1,\cdots,\Sigma_b$ be the connected components of $\Sigma$.
  Because $\Sigma$ is compact, the number
  $$\gamma:=\max_{1\leq i\leq b}\mbox{disto}(\Sigma_i)$$
  is finite. It follows that for any
  $r>0$, any $x\in\Sigma$ and each $i\in\{1,\cdots,b\}$,
  $$B_{\R^m}(x,r)\cap\Sigma\subset \cup_{i=1}^bB_{\Sigma_i}(x,\gamma
  r).$$
  Here and further $B_{\R^m}(x,r)$ is the usual Euclidean ball of
  radius $r$ centered at $x$, and $B_{\Sigma_i}(x,\gamma r)$ is an
  intrinsic geodesic ball in $\Sigma_i$.
  Note that for $x\in \R^m$ the ball $B_{\R^m}(x,r)$ is of
  $\mu$-measure $0$ if and only if it does not intersect any of the boundary components $\Sigma_i$. Therefore, in the
  situation where it intersects $\Sigma_i$, there exists a point $y_i\in \Sigma_i$ such that
  $$B_{\R^m}(x,r)\cap\Sigma_i\subset B_{\R^m}(y_i,2r)\cap\Sigma_i\subset B_{\Sigma_i}(y_i,2\gamma r).$$
  The compactness of $\Sigma$ also implies a lower bound on the Ricci curvature.
  Hence it follows from the Bishop-Gromov theorem that there exists $\eta>0$
  such that any ball $B_\Sigma(r)$ of radius $r$ in $\Sigma$ satisfies
  $$\vert B_\Sigma(r)\vert \le \eta r^{n-1}.$$
  In fact, $\eta$ may be bounded above  in terms of the lower bound on
  the Ricci curvature and an upper bound for the diameter of $\Sigma$.
  Together with the above, this implies that
  $$\mu(B_{\R^m}(x,r))\leq b\eta (2\gamma r)^{n-1}=\delta r^{n-1},$$
  where $\delta=b\eta(2\gamma)^{n-1}$.


  We apply Lemma~\ref{CMrevisited} with $K=2k+2$ and
  $$r=r_k=\left(\frac{\vert \Sigma\vert}{8C(m)^2(k+1)\delta}\right)^{1/(n-1)},$$
  so that the inequality
  $$\mu(B(x,r)) \le \frac{\mu(\Sigma)}{4C(m)^2(2k+2)}$$
  is satisfied.

\smallskip
We get $2k+2$ subsets $A_1,...,A_{2k+2}$ in $\R^{m}$ such that

$$\mu(A_i)\ge \frac{\vert\Sigma \vert}{4C(m)(k+1)}\quad\mbox{ for each }
i$$
and, if
$i\not=j$, $d(A_i,A_j) \ge 3r_k$.

\smallskip
For each $i \in \{1,\dots,2k+2\}$, define the $r_k$-neighborhood of $A_i$ as
\begin{equation*}
A_i^{r_k} = \{x \in \Omega : d(x,A_i)<r_k\} \subset \R^{n+1}.
\end{equation*}

\smallskip
Since $A_1^{r_k}, \dots, A_{2k+2}^{r_k}$ are disjoint, there exist $k+1$ of them, say
$A_1^{r_k}, \dots, A_{k+1}^{r_k}$, such that, for $i \in \{1,\dots,k+1\}$, we have
\begin{equation*}
\vert M \cap A_i^{r_k} \vert \leq \frac{\vert M \vert}{k+1}.
\end{equation*}
Similarly to \cite{cm}, for each $i \in \{1,\dots,k+1\}$, we construct a test function $g_i$ with support in
$A_i^{r_k}$ as follows. For each $x \in A_i^{r_k}$,
\begin{equation*}
g_i(x) : = 1 - \frac{d(A_i,x)}{r_k}.
\end{equation*}
Then $\vert \nabla g_i \vert^2 \leq \frac{1}{r_k^2}$ almost everywhere in $A_i^{r_k}$. So we have that,
for each $i \in \{1,\dots,k+1\}$,
\begin{equation*}
\int_{M} \vert \nabla g_i \vert^2 \, dV_M = \int_{M \cap A_i^{r_k}} \vert \nabla g_i \vert^2 \, dV_M
\leq \frac{\vert M \cap A_i^{r_k} \vert}{r_k^2} \leq \frac{\vert M \vert}{(k+1) r_k^2}.
\end{equation*}

The Rayleigh quotient $R(g_i) = \frac{\int_{M} \vert \nabla g_i \vert^2 \, dV_M}{\int_{\Sigma} g_i^2\,dV_\Sigma} $ of $g_i$ becomes

\begin{align*}
  R(g_i)  &\le \frac{\vert M \vert}{(k+1) r_k^2}
  \frac{1}{\mu(A_k)}\\
  &=
  \frac{\vert M \vert}{(k+1) } \frac{4C(m)(k+1)}{\vert \Sigma\vert}\left(\frac{8C(m)^2(k+1)\delta}{\vert \Sigma\vert}\right)^{2/(n-1)},
\end{align*}
which leads to
$$
\sigma_k(M)\le A(n,m,\delta)\frac{\vert M\vert}{\vert \Sigma \vert} \left(\frac{k}{\vert \Sigma\vert}\right)^{2/(n-1)}.
$$
\end{proof}

\subsection{Proof of the upper bound for general submanifolds contained as a
  hypersurface in a linear subspace}
\label{section:generalupperbound}

The goal of this section is to prove Theorem~\ref{thm:generalupperbound}

The idea is to apply the method that was developed in \cite{ceg1} and we recall
it for the convenience of the reader, following closely what was done
in \cite{ceg1} . We cover $\Sigma$ with a family of disjointly
supported sets in $\mathbb R^n$. As $M \subset \R^{m}$, we will
replace each of these sets $A \subset \R^n$ by $A\times \R^{m-n}$ in
order to have disjointly supported domains in $\R^{m}$. We then
construct test functions on these sets to obtain upper bounds for the
Steklov eigenvalues of $M$.

In the first step of the proof of
Theorem~\ref{thm:generalupperbound}, the volume $\vert \Omega\vert$ of
$\Omega$ (the subset of $\R^n$ determined by $\Sigma$) enters into the
game as it did in \cite{ceg1}. The proof is divided into four
steps. In the first two steps, we only work with $\Sigma \subset
\R^n$ and its relationship to $\Omega$. These first two steps are very
close to the method in \cite{ceg1}.
In the third and fourth steps,
we also have to take the volume $\vert M\vert$ of $M$ into
account. This explains the presence of the ratio $\frac{\vert M
  \vert}{\vert \Omega \vert^{(n+1)/n}}$ in the statement of
Theorem~\ref{thm:generalupperbound}.
\begin{proof}[Proof of Theorem~\ref{thm:generalupperbound}]
  \begin{itemize}
  \item[]
  \end{itemize}

\medskip
\noindent
\textbf{First step.}  Fix $k \in \N$. Our first goal is to show that $\Sigma$ cannot be covered by $2(k+1)$ balls
in $\R^n$ each of radius $4r_k$, where
\begin{equation}\label{e1.2a}
  r_k = \biggl(\frac{n \omega_n^{1/n}}{4^{n+1}\rho_{n-1}(k+1)}\biggr)^{1/(n-1)}\vert \Omega \vert^{1/n}
\end{equation}
and where $\omega_n$ denotes the volume of the Euclidean unit ball
$\mathbb B^n$, and $\rho_{n-1}$ the volume of the unit sphere $\Sp^{n-1}\subset \R^n$.

\noindent
Let $x_1, x_2, \dots , x_{2k+2}$ be arbitrary points in $\Omega$, and define
\begin{equation*}
\Omega_0 = \Omega \setminus \cup_{j=1}^{2k+2} B(x_j,4r_k),
\end{equation*}
and
\begin{equation*}
\Sigma_0 = \Sigma \setminus \cup_{j=1}^{2k+2} B(x_j,4r_k).
\end{equation*}
Since $B(x_j,4r_k) \subset \R^n$ for $j \in \{1,2,\dots, 2k+2\}$, we have $\vert B(x_j,4r_k) \vert
< 2 \omega_n(4r_k)^n$ and $\vert \partial B(x_j,4r_k) \vert < 2 \rho_{n-1}(4r_k)^{n-1}$. So
\begin{equation}\label{e1.2d}
\sum_{j=1}^{2k+2} \vert B(x_j, 4r_k) \vert < 4(k+1) \omega_n (4r_k)^n,
\end{equation}
and
\begin{align}\label{e1.2e}
(4r_k)^n &= 4^n\biggl(\frac{n \omega_n^{1/n}}{4^{n+1}\rho_{n-1}(k+1)}\biggr)^{n/(n-1)} \vert \Omega \vert\nonumber\\
&< \frac{\vert \Omega \vert}{16(k+1)}\biggl(\frac{n \omega_n^{1/n}}{\rho_{n-1}}\biggr)^{n/(n-1)}
= \frac{\vert \Omega \vert}{16 \omega_n (k+1)}.
\end{align}
By \eqref{e1.2d} and \eqref{e1.2e}, we have that
\begin{equation*}
\sum_{j=1}^{2k+2} \vert B(x_j, 4r_k) \vert < \frac{\vert \Omega \vert}{4},
\end{equation*}
which implies that
\begin{equation}\label{e1.2g}
\vert \Omega_0 \vert \geq \vert \Omega \vert - \sum_{j=1}^{2k+2} \vert B(x_j,4r_k) \vert
> \frac34 \vert \Omega \vert
\end{equation}
i.e. the union of the balls does not cover $\Omega$.

Now $\partial \Omega_0$ is the union of $\Sigma_0$ and parts of the boundaries of the balls
$\{B(x_j,4r_k) : j=1,2,\dots,2k+2\}$. Hence
\begin{align}\label{e1.2h}
\vert \partial \Omega_0 \vert &\leq \vert \Sigma_0 \vert + \sum_{j=1}^{2k+2} \vert \partial B(x_j, 4r_k) \vert\nonumber\\
&< \vert \Sigma_0 \vert + 4(k+1)\rho_{n-1}(4r_k)^{n-1}\nonumber\\
&= \vert \Sigma_0 \vert + \frac{n\omega_n^{1/n}}{4} \vert \Omega\vert^{(n-1)/n}.
\end{align}
  \begin{figure}
    \includegraphics[width=7cm]{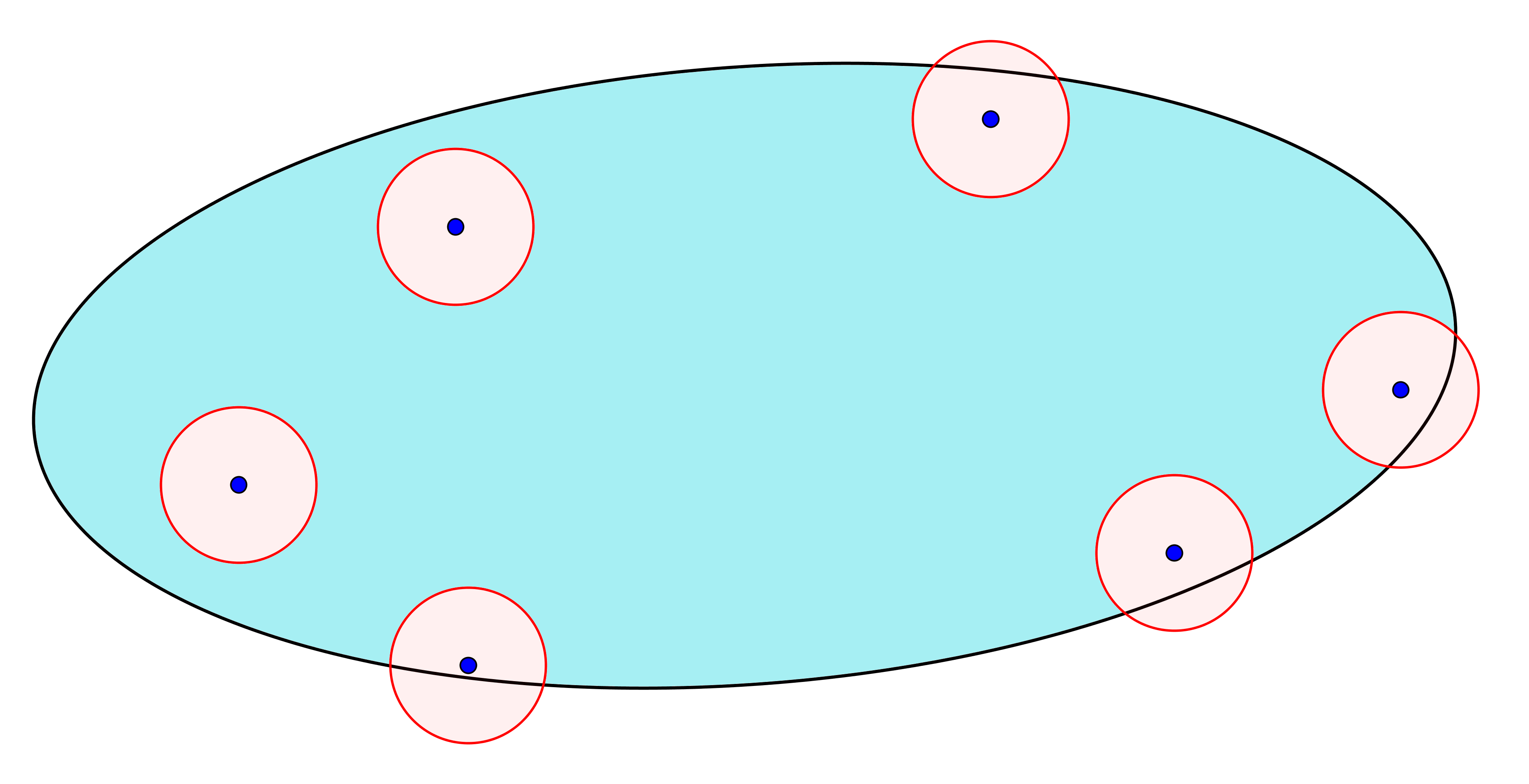}
    \caption{The domain $\Omega_0$ is obtained by removing balls from $\Omega$}
    \label{figure:proof}
  \end{figure}

It follows from the classical Isoperimetric Inequality in $\R^n$ and from
Inequality~\eqref{e1.2g} that
\begin{equation}\label{e1.2j}
\vert \partial \Omega_0 \vert \geq n\omega_n^{1/n} \vert \Omega_0 \vert^{(n-1)/n}
\geq n\omega_n^{1/n} \biggl(\frac34\biggr)^{(n-1)/n} \vert \Omega \vert^{(n-1)/n}.
\end{equation}
So, by \eqref{e1.2h} and \eqref{e1.2j}, we obtain that
\begin{equation}\label{e1.2k}
\vert \Sigma_0 \vert > n\omega_n^{1/n} \biggl(\biggl(\frac34\biggr)^{(n-1)/n} - \frac14\biggr)\vert \Omega \vert^{(n-1)/n} \geq \frac{n\omega_n^{1/n}}{2}\vert \Omega \vert^{(n-1)/n} >0.
\end{equation}
This means that the union of the balls $B(x_j,4r_k)$ does not cover $\Sigma$ and we are done with the first step of the proof.

\medskip
\noindent
\textbf{Second step.} Our goal is to construct balls $B(x_1,r_k),
\dots, B(x_{2k+2},r_k)$ in $\R^n$ with the following
properties:

\begin{enumerate}
\item The balls $B(x_1,2r_k), \dots , B(x_{2k+2},2r_k)$ are mutually disjoint;\\
\item $\mu(B(x_1,r_k)) \geq \mu(B(x_2,r_k)) \geq \dots \geq \mu(B(x_{2k+2},r_k))$;\\
\item For all $x \in \Omega_0 = \Omega \setminus \cup_{j=1}^{2k+2} B(x_j,4r_k)$;
  $$\mu(B(x,r_k)) \leq \mu(B(x_{2k+2},r_k)).$$
\end{enumerate}


Since $\Sigma$ cannot be covered by $2k+2$ balls each of radius $4r_k$, we can construct the collection of $2k+2$
 balls $B(x_1,r_k), \dots, B(x_{2k+2},r_k)$ in $\R^n$ inductively as follows:
\begin{equation*}
\mu(B(x_1,r_k)) = \sup_{x \in \Omega} \mu(B(x,r_k)),
\end{equation*}
and, for $j \in \{2, \dots, 2k+2\}$,
\begin{equation*}
\mu(B(x_j,r_k)) = \sup \{\mu(B(x,r_k)) : x \in \Omega \setminus \cup_{i=1}^{j-1} B(x_i,4r_k)\}.
\end{equation*}

By construction, the balls
$B(x_1,r_k), B(x_2,r_k), \dots, B(x_{2k+2},r_k)$ have non-empty
intersection with $\Sigma$, and
 $\mu(B(x_i,r_k))$ is a monotone decreasing function of $i$.

\medskip
In the sequel, we will consider the two situations
\begin{equation}\label{ineq:case1}
\mu(B(x_{2k+2},r_k)) \geq \frac{n\omega_n^{1/n}}{16 C(n)^2 (k+1)} \vert \Omega \vert^{(n-1)/n}
\end{equation}
and
\begin{equation}\label{ineq:case2}
\mu(B(x_{2k+2},r_k)) \leq \frac{n\omega_n^{1/n}}{16 C(n)^2 (k+1)} \vert \Omega \vert^{(n-1)/n}
\end{equation}
where we recall that $C(n)$ is the packing constant $N(r)$ of $\R^n$
which appears in Lemma~\ref{CMrevisited}.

The theorem will be proved independently in each case. In the
situation where Inequality \eqref{ineq:case1} holds, each of the balls
$B(x_i,r_k)$ will be heavy enough to allow the construction of test functions
supported close to these balls. This is done in the third step
below. In the situation where Inequality \eqref{ineq:case2} holds, it
will be necessary to use Lemma~\ref{CMrevisited} to obtain $k+1$
disjoint sets and proceed with the construction of appropriate test
functions. This will be the fourth and final step of the proof.

\medskip
\noindent
\textbf{Third step.} We suppose that Inequality \eqref{ineq:case1} holds.

For each $j \in \{1,2, \dots, 2k+2\}$,
define the \emph{cylinder}
\begin{equation*}
\bar{B}(x_j,2r_k) = B(x_j,2r_k) \times \R^{m-n} \subset \R^{m}.
\end{equation*}
Since the underlying balls are disjoint, the \emph{cylinders} $\bar{B}(x_1,r_k), \dots, \bar{B}(x_{2k+2},r_k)$ are
disjoint in $\R^{m}$. Then there exist $k+1$ of them, say $\bar{B}(x_1,r_k), \dots, \bar{B}(x_{k+1},r_k)$, such that
for $j \in \{1, \dots, k+1\}$, we have
\begin{equation*}
\vert M \cap \bar{B}(x_j,2r_k) \vert \leq \frac{\vert M \vert}{k+1}.
\end{equation*}

For each $j \in \{1,\dots,k+1\}$ and each $x \in \bar{B}(x_j,2r_k)$, define a test function $f_j$ supported on
$\bar{B}(x_j,2r_k)$ as follows:
\begin{equation*}
f_j(x) : = \min\bigg\{1, 2-\frac{1}{r_k}d(x_j \times \R^{m-n}, x)\bigg\}.
\end{equation*}
Since $\vert \nabla f_j \vert^2 \leq \frac{1}{r_k^2}$ almost everywhere
in $\bar{B}(x_j,2r_k)$, we have that
\begin{align}\label{e1.2t}
\int_{M} \vert \nabla f_j \vert^2 \, dV_M &= \int_{M \cap \bar{B}(x_j,2r_k)} \vert \nabla f_j \vert^2 \, dV_M\nonumber\\
&\leq \frac{\vert M \cap \bar{B}(x_j,2r_k)\vert}{r_k^2} \leq \frac{\vert M \vert}{(k+1) r_k^2}.
\end{align}
We also have that
\begin{align}\label{e1.2u}
\int_{\Sigma} \vert f_j \vert^2 \, dV_\Sigma &= \int_{\Sigma \cap \bar{B}(x_j,2r_k)} \vert f_j \vert^2 \, dV_\Sigma\nonumber\\
&= \int_{\Sigma \cap B(x_j,2r_k)} \vert f_j \vert^2 \, dV_\Sigma \notag\\
&\geq \int_{\Sigma \cap B(x_j,r_k)} dV_\Sigma\nonumber\\
&=\mu(B(x_j,r_k)) \geq \frac{n\omega_n^{1/n}}{16 C(n)^2 (k+1)} \vert \Omega \vert^{(n-1)/n}.
\end{align}
Hence, by \eqref{e1.2t} and \eqref{e1.2u}, using the explicit value of
$r_k$ given in \eqref{e1.2a} we have that the Rayleigh quotient of $f_j$ is
\begin{align*}
R(f_j) &= \frac{\int_{M} \vert \nabla f_j \vert^2 \, dV_M}{\int_{\Sigma} \vert f_j \vert^2 \, dV_\Sigma}
\leq \frac{\vert M \vert}{r_k^2} \frac{16C(n)^2  }{n\omega_n^{1/n} \vert \Omega \vert^{(n-1)/n}} \notag\\
&\leq C_1(n)
\frac{\vert M \vert }{ \vert \Omega \vert^{(n+1)/n}} (k+1)^{2/(n-1)},
\end{align*}
for some constant $C_1(n)$. It follows that
\begin{equation}\label{ineq:finalstep3}
\sigma_k(\Sigma) \leq C_1(n)\frac{\vert M \vert}{\vert \Omega \vert^{(n+1)/n}} (k+1)^{2/(n-1)}.
\end{equation}

\noindent
\textbf{Fourth step.} Now suppose that Inequality \eqref{ineq:case2}
holds.

By construction of the balls, for all $x \in \Omega_0 = \Omega \setminus \cup_{j=1}^{2k+2} B(x_j,4r_k)$,
we have by \eqref{e1.2k} that
\begin{align*}
  \mu(B(x,r_k)) &\leq \mu(B(x_{2k+2},r_k))\nonumber\\
  &\leq \frac{n\omega_n^{1/n}}{16 C(n)^2 (k+1)} |\Omega|^{(n-1)/n}
  \leq\frac{\vert \Sigma_0 \vert}{8 C(n)^2 (k+1)}.
\end{align*}
This  implies that
\begin{equation}\label{e1.2z}
4 C(n)^2 \mu(B(x,r_k)) \leq \frac{\mu(\Sigma_0)}{2k+2}.
\end{equation}
With this in mind, we would like to make use of Lemma~\ref{CMrevisited}.
Consider the metric measured space $(\R^n,d,\mu_0)$ where $d$ is
the Euclidean distance and the measure
$\mu_0$ is the restriction of the measure $\mu$ to $\Sigma_0$. That is,
for any Borel set $U \subset \R^n$,
$\mu_0(U) = \mu(U \cap \Sigma_0)$ so $\mu_0(\R^n) = \mu(\Sigma_0) = \vert \Sigma_0 \vert < +\infty$.

Choose $K=2k+2$ and $r=r_k$. Then \eqref{e1.2z} becomes
\begin{equation*}
4 C(n)^2 \mu(B(x,r_k)) \leq \frac{\mu_0(\Sigma_0)}{2k+2}.
\end{equation*}
Hence, by Lemma~\ref{CMrevisited}, there exist $2k+2$ measurable sets $A_1, \dots, A_{2k+2} \subset \R^n$
such that
\begin{enumerate}
\item $\mu(A_i) \geq \frac{\vert \Sigma_0 \vert}{4 C(n) (k+1)}$
for each $i \in \{1,\dots,2k+2\}$.\\
\item $d(A_i,A_j) \geq 3r_k$ for $i \neq j$.
\end{enumerate}
We then proceed exactly as in the proof of Theorem~\ref{thm:verygeneralupperbound}. For each $i \in \{1,\dots,2k+2\}$, define the $r_k$-neighborhood of $A_i$ as
\begin{equation*}
A_i^{r_k} = \{x \in \Omega : d(x,A_i)<r_k\} \subset \R^n,
\end{equation*}
and the \emph{cylinder}
\begin{equation*}
\bar{A}_i^{r_k} = A_i^{r_k} \times \R^{m-n} \subset \R^{m}.
\end{equation*}
Since $\bar{A}_1^{r_k}, \dots, \bar{A}_{2k+2}^{r_k}$ are disjoint, there exist $k+1$ of them, say
$\bar{A}_1^{r_k}, \dots, \bar{A}_{k+1}^{r_k}$, such that, for $i \in \{1,\dots,k+1\}$, we have
\begin{equation*}
\vert M \cap \bar{A}_i^{r_k} \vert \leq \frac{\vert M \vert}{k+1}.
\end{equation*}
Similarly to \cite{cm}, for each $i \in \{1,\dots,k+1\}$, construct a test function $g_i$ with support in
$\bar{A}_i^{r_k}$ as follows. For each $x \in \bar{A}_i^{r_k}$,
\begin{equation*}
g_i(x) : = 1 - \frac{d(A_i \times \R^{m-n},x)}{r_k}.
\end{equation*}
Then $\vert \nabla g_i \vert^2 \leq \frac{1}{r_k^2}$ almost everywhere in $\bar{A}_i^{r_k}$. So we have that,
for each $i \in \{1,\dots,k+1\}$,
\begin{equation}\label{e1.2zf}
\int_{M} \vert \nabla g_i \vert^2 \, dV_M = \int_{M \cap \bar{A}_i^{r_k}} \vert \nabla g_i \vert^2 \, dV_M
\leq \frac{\vert M \cap \bar{A}_i^{r_k} \vert}{r_k^2} \leq \frac{\vert M \vert}{(k+1) r_k^2},
\end{equation}
and
\begin{align}\label{e1.2zg}
\int_{\Sigma} \vert g_i \vert^2 \, dV_\Sigma &= \int_{\Sigma \cap \bar{A}_i^{r_k}} \vert g_i \vert^2 \, dV_\Sigma
= \int_{\Sigma \cap A_i^{r_k}} \vert g_i \vert^2 \, dV_\Sigma \notag\\
&\geq \int_{\Sigma \cap A_i} dV_\Sigma = \mu_0(A_i) \geq \frac{\vert \Sigma_0 \vert}{4 C(n) (k+1)}.
\end{align}
Hence, by \eqref{e1.2zf} and \eqref{e1.2zg}, we have that the Rayleigh quotient of $g_i$ is
\begin{align*}
  R(g_i) &=
  \frac{\int_{M} \vert \nabla g_i \vert^2 \, dV_M}{\int_{\Sigma} \vert g_i \vert^2 \, dV_\Sigma}
  \leq \frac{\vert M \vert}{r_k^2} \frac{4 C(n)}{\vert \Sigma_0 \vert}
  < \frac{8 C(n)}{n \omega_n^{1/n} r_k^2} \frac{\vert M \vert }{\vert \Omega \vert^{(n-1)/n}} \notag\\
  &\leq C_2(n)
  \frac{\vert M \vert }{\vert  \Omega \vert^{(n+1)/n}} (k+1)^{2/(n-1)},
\end{align*}
for some constant $C_2(n)$. This implies that
\begin{equation}\label{ineq:finalstep4}
\sigma_k(\Sigma) \leq C_2(n)
\frac{\vert M \vert}{\vert \Omega \vert^{(n+1)/n}} (k+1)^{2/(n-1)}.
\end{equation}

\noindent
\textbf{Conclusion of the proof}. It
follows from Inequality \eqref{ineq:finalstep3} and Inequality
\eqref{ineq:finalstep4} that
\begin{align*}\sigma_k(\Sigma) &\leq \max\{C_1(n),C_2(n)\}\frac{\vert M
    \vert}{\vert \Omega \vert^{(n+1)/n}} (k+1)^{2/(n-1)}\\
  &\leq A(n)\frac{\vert M
    \vert}{\vert \Omega \vert^{(n+1)/n}} k^{2/(n-1)},
\end{align*}
where $A(n)=4\max\{C_1(n),C_2(n)\}$.

\end{proof}

\appendix
\section{Surfaces of revolution with one boundary component are Steklov isospectral}\label{section:isospectral}
Let $\Sp^1$ be the unit circle.
Let $M\subset\R^3$ be the surface of revolution with connected boundary
$\partial M=\Sp^1\subset\R^2\times\{0\}$.
\begin{prop}
  The surface $M$ is Steklov isospectral to the disk: for each $k\in\N$,
  $$\sigma_k(M)=\sigma_k(\mathbb{D}).$$
\end{prop}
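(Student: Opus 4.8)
The plan is to reduce the Steklov problem on the surface of revolution $M$ to a one–dimensional Sturm–Liouville problem in each Fourier mode and to show that, mode by mode, it produces exactly the same eigenvalues as the unit disk $\mathbb{D}$. Since $M$ is a surface, the intrinsic metric has the form $g = dr^2 + h(r)^2\,d\theta^2$ on $\Sp^1\times[0,L]$ with $h(0)=1$, $h(L)=0$, $|h'|\le 1$, and $h(r)>0$ for $0<r<L$; the boundary $\Sigma=\partial M$ is the circle $r=0$. A function $u$ on $M$ is expanded as $u(r,\theta)=\sum_{j\in\mathbb Z} a_j(r)e^{ij\theta}$, and by formula \eqref{formulaenergy} (with $n=2$) the Dirichlet energy decouples into the sum over $j$ of $\int_0^L\big(a_j'(r)^2 h(r) + j^2 a_j(r)^2 h(r)^{-1}\big)\,dr$, while the boundary integral is $2\pi\sum_j|a_j(0)|^2$. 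Thus $\sigma$ is a Steklov eigenvalue precisely when for some $j$ there is a nonzero solution of the harmonic equation in that mode with the Steklov boundary condition at $r=0$ and smoothness at $r=L$.

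The key step is to change variables so that this Sturm–Liouville problem becomes the one for the flat disk. The harmonic equation in mode $j$ is $(h a_j')' = j^2 h^{-1} a_j$. Introduce the new radial variable $t$ defined by $dt/dr = 1/h(r)$, i.e. $t(r)=\int_0^r h(s)^{-1}\,ds$, which is a smooth increasing bijection from $[0,L)$ onto $[0,\infty)$ (the divergence of the integral at $r=L$ follows from $h(L)=0$ together with $|h'|\le 1$, which forces $h(r)\le L-r$ near $L$, so $h(r)^{-1}$ is not integrable there). In the variable $t$ the equation becomes $\ddot a_j = j^2 a_j$, whose solutions bounded as $t\to\infty$ are spanned by $e^{-|j|t}$ for $j\neq 0$, and the constants together with the (excluded, unbounded) linear function for $j=0$. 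Pulling back, the harmonic function in mode $j\neq 0$ is, up to scaling, $a_j(r)=e^{-|j|t(r)}$, and the Steklov condition $\partial_\nu u = \sigma u$ at $r=0$ — where $\partial_\nu = -\partial_r$ and $h(0)=1$ so $dt/dr|_{r=0}=1$ — reads $|j| = \sigma$. For $j=0$ the bounded harmonic functions are the constants, giving $\sigma_0=0$. Hence the Steklov spectrum of $M$ is exactly $\{0\}\cup\{|j| : j\in\mathbb Z\setminus\{0\}\}$, i.e. every nonnegative integer, each integer $m\ge 1$ occurring with multiplicity $2$, which is precisely $\sigma_k(\mathbb D)$ (see \cite[Example 1.3.2]{gpsurvey}).

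The main technical point, and the one requiring care rather than the formal computation, is the regularity analysis at the degenerate endpoint $r=L$: one must justify that the admissible harmonic functions on $M$ are exactly those that extend smoothly across the rotation axis (the point $\psi(\cdot,L)$), that this smoothness requirement is equivalent to the boundedness condition $\|\nabla u\|_{L^2(M)}<\infty$ selected by Lemma~\ref{lemma:sobolev}, and that it singles out the solution $e^{-|j|t(r)}$ rather than the exponentially growing one $e^{|j|t(r)}$. Once the boundedness/smoothness dichotomy at $r=L$ is in place, the rest is the substitution above and matching against the known spectrum of the disk. An alternative, more synthetic route is to note that $(M,g)$ is conformally equivalent to the flat disk via the diffeomorphism built from $t(r)$ (so $g = e^{2\phi}g_{\mathbb D}$ with $\phi$ vanishing on $\Sigma$), and invoke the conformal invariance of the Steklov problem in dimension $2$ when the conformal factor is $1$ on the boundary, which is the Osgood–Phillips–Sarnak picture \cite{OPS}; I would still prefer the direct Fourier computation as it keeps the argument self-contained and elementary.
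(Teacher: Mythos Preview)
Your proposal is correct and follows essentially the same route as the paper: separation of variables in $\theta$, the change of radial variable $t(r)=\int_0^r h(s)^{-1}\,ds$ which turns the mode equation $(ha')'=j^2h^{-1}a$ into $\ddot a=j^2 a$, divergence of $t$ at $r=L$, and selection of the decaying branch $e^{-|j|t}$ to obtain the Steklov eigenvalue $|j|$. The only cosmetic difference is that the paper phrases the selection criterion at $r=L$ as the continuity requirement $a(L)=0$ (so that $a(r)T_n(\theta)$ extends across the rotation axis), whereas you phrase it as boundedness/finite energy; the substance is the same.
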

In fact, the Dirichlet-to-Neumann map
$\Lambda:C^\infty(\Sp^1)\rightarrow C^\infty(\Sp^1)$ is the same
for both surfaces.

\begin{proof}
Let $\gamma:[0,L]\rightarrow\R^3$ be a unit-speed curve
$\gamma(r)=(h(r),0,z(r))$ such that $M$ is obtained by rotation around
the $z$-axis, with $r=0$
corresponding to the boundary.
In particular, the functions $h$ and $z$ satisfy $h'^2+z'^2=1$ from which it follows that
$$h(0)=1,\quad z(0)=0,\quad h(L)=0,\quad  z'(L)=0,\quad h'(L)=-1.$$
The corresponding metric on the cylinder $M=[0,L]\times \Sp^1$ is of the form
$$g=dr^2+h(r)^2 g_0$$
where $g_0=d\theta^2$ is the standard metric on the circle $\Sp^1$.
The Laplacian of $f\in C^\infty(M)$ is then given by
$$\Delta f=\frac{1}{h}\left\{(hf_r)_r+(h^{-1}f_\theta)_\theta\right\}.$$
Given $T\in\{\cos,\sin\}$ and $n\in\N$, let
$T_n(\theta)=T(n\theta)$. The Laplacian applied to the product
$f(r,\theta)=a(r)T_n(\theta)$ is
$$\Delta f=\frac{1}{h}\left\{(ha')'T_n+ah^{-1}((T_n)_{\theta\theta})\right\}
=\frac{1}{h}\left((ha')'-n^2ah^{-1}\right)T_n.$$
This implies that the function $f(r,\theta)=a(r)T_n(\theta)$ is harmonic
if and only if the function $a$ satisfies
\begin{gather}\label{equation:harmonic}
  (ha')'-n^2h^{-1}a=0.
\end{gather}
Note that in order for the function $f(r,\theta)=a(r)T_n(\theta)$ to
represent a continuous function on the surface of revolution $M$, it must
also satisfy the condition
$a(L)=0$.
Together with an initial value $a(0)$ this completely determines the function $a$.
\begin{lemma}\label{lemma:solution}
  Given $n\in\N$, and $c\in\R$ the unique function $a:[0,L]\rightarrow\R$ which satisfies
  \begin{gather*}
    \begin{cases}
      (ha')'-n^2h^{-1}a=0&\mbox{ in }(0,L);\\
      a(0)=c\quad \mbox{ and }\quad a(L)=0
    \end{cases}
  \end{gather*}
is given by
$a(r)=ce^{-n\tau(r)}$
where $\tau(r)=\int_0^r\frac{1}{h(s)}\,ds.$
\end{lemma}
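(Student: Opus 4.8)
The plan is to verify by direct substitution that $a(r)=ce^{-n\tau(r)}$ solves the boundary value problem, and then to establish uniqueness by describing the full two-dimensional solution space of the equation on $(0,L)$ together with the asymptotic behaviour of $\tau$ near the singular endpoint $r=L$. (Throughout one uses $n\ge 1$, which is the relevant range: for $n=0$ the equation reduces to $(ha')'=0$ and forces $a$ to be constant.) For the verification, since $\tau'(r)=1/h(r)$ on $(0,L)$ one gets $a'=-\frac{n}{h}\,a$, hence $ha'=-na$, and therefore $(ha')'=-na'=\frac{n^2}{h}\,a=n^2h^{-1}a$; so the equation holds on $(0,L)$. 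The initial value is immediate from $\tau(0)=0$, which gives $a(0)=c$.

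Next I would check the boundary condition $a(L)=0$. The key point is that $\tau(r)\to+\infty$ as $r\to L^-$: since $h(L)=0$ and $\vert h'\vert\le 1$ on $[0,L]$, we have $h(s)=\int_s^L(-h'(t))\,dt\le L-s$ for all $s\in[0,L)$, hence $\tau(r)=\int_0^r\frac{ds}{h(s)}\ge\int_0^r\frac{ds}{L-s}=\log\frac{L}{L-r}\to+\infty$ as $r\to L^-$. Consequently $a(r)=ce^{-n\tau(r)}\to 0$ as $r\to L^-$, so $a$ extends continuously to $[0,L]$ with $a(L)=0$.

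Finally, uniqueness. On the open interval $(0,L)$ the coefficients of $(ha')'-n^2h^{-1}a=0$ are smooth and $h>0$, so this is a regular second-order linear ODE with a two-dimensional solution space. A solution independent of $a_1(r):=e^{-n\tau(r)}$ is $a_2(r):=e^{n\tau(r)}$: the same computation gives $(ha_2')'=n^2h^{-1}a_2$, and the Wronskian $a_1a_2'-a_1'a_2=\frac{2n}{h}$ never vanishes. Hence every solution has the form $a=\alpha a_1+\beta a_2$. Since $a_2(r)=e^{n\tau(r)}\to+\infty$ near $r=L$ while $a_1$ stays bounded, any solution with $\beta\neq0$ is unbounded near $r=L$ and cannot extend continuously there, let alone with value $0$; so the requirement $a(L)=0$ forces $\beta=0$, and then $a(0)=\alpha=c$. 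Thus $a=c\,a_1$ is the unique solution. The one delicate point is the endpoint $r=L$, where $h^{-1}$ blows up and classical ODE uniqueness does not apply directly; it is precisely the (logarithmic) divergence of $\tau$ at $L$, which makes one of the two fundamental solutions explode, that singles out the unique solution extending continuously to $[0,L]$.
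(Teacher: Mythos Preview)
Your proof is correct and follows essentially the same route as the paper: both arguments hinge on identifying $e^{\pm n\tau(r)}$ as a fundamental system for the ODE and on the divergence $\tau(r)\to+\infty$ as $r\to L^-$, which kills the growing mode and forces $a=c\,e^{-n\tau}$. The only cosmetic difference is that the paper derives the fundamental system via the change of variable $r\mapsto\tau$ (reducing the equation to $\alpha''=n^2\alpha$), whereas you verify the candidate solution directly and then check independence by the Wronskian; your bound $h(s)\le L-s$ from $|h'|\le 1$ is in fact a slightly cleaner way to get $\tau(L)=+\infty$ than the paper's Taylor expansion using $h'(L)=-1$.
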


It follows from Lemma \ref{lemma:solution} that the function
$f:M\rightarrow\R$ which is defined by
$f(r,\theta)=a(r)T_n(\theta)$ satisfies
$$\partial_{\nu}f=-a'(0)T_n(\theta)=nce^{-n\tau(0)}\tau'(0)T_n(\theta)=nf \quad \text{on $\partial M$}.$$
\end{proof}

\begin{proof}[Proof of Lemma \ref{lemma:solution}]
  In order to solve this equation,
  define $\tau:[0,L)\rightarrow\R$ by
    $$\tau(r)=\int_0^r\frac{1}{h(s)}\,ds.$$
  Let us first prove that $\tau(L)=+\infty$. Indeed, it follows from
  $h(L)=0$ and $h'(L)=-1$ that
  $$h(L-t)=h(L)+h'(L)(-t)+o(t)=t+o(t).$$
  Hence for small enough $t>0$, 
  $$\frac{1}{h(L-t)}=\frac{1}{t+o(t)}=\frac{1}{t}\left(\frac{1}{1+o(1)}\right)>\frac{1}{2t},$$
  which implies our claim that $\tau(L)=+\infty$.

  Therefore
  the function $\tau:[0,L)\rightarrow [0,\infty)$ is a bijection.
  Let $r:[0,\infty)\rightarrow [0,L)$ be its inverse.
  Let us prove that the function
  $\alpha:[0,\infty)\rightarrow\R$ defined by
  $\alpha(\tau)=a(r(\tau))$
  satisfies
  $$\alpha''=n^2\alpha\qquad\mbox{ on }\quad(0,\infty).$$
  Indeed, let's compute
  $$\alpha'(\tau)=a'(r(\tau))r'(\tau)=a'(r(\tau))h(r(\tau))=a'h(r(\tau)).$$
  Differentiating one more time leads to
  \begin{align*}
    \alpha''(\tau)&=(a'h(r(\tau)))'\\
    &=(a'h)'(r(\tau))r'(\tau)\\
    &=n^2h^{-1}a(r(\tau))h(r(\tau))\\
    &=n^2a(r(\tau))=n^2\alpha(\tau),
  \end{align*}
  as we claimed.
  It follows that there are two constants $A,B\in\R$ such that
  \begin{align*}
    a(r(\tau))&=\alpha(\tau)\\
    &=Ae^{n\tau}+Be^{-n\tau}.
  \end{align*}
  From the condition $a(L)=0$, it follows that
  $$\lim_{\tau\to\tau(L)=\infty}\alpha(\tau)=0.$$
  This amounts to $A=0$.
  The boundary condition $a(0)=c$ leads to $B=c$ and so
  $\alpha(\tau)=ce^{-n\tau}$.
\end{proof}

\medskip\noindent{\bf Acknowledgments.}
While a postdoctoral student at the Universit\'e de Neuch\^atel, KG was supported by the Swiss National Science Foundation grant no.\ 200021\_163228 entitled \emph{Geometric Spectral Theory}. AG acknowledges support from the NSERC Discovery Grants Program.

\bibliographystyle{plain}
\bibliography{biblioCGG}

\end{document}